\newtheorem{theorem}{Theorem}[section]
\newtheorem{corollary}[theorem]{Corollary}
\newtheorem{lemma}[theorem]{Lemma}
\newtheorem{proposition}[theorem]{Proposition}
\theoremstyle{definition}
\newtheorem{definition}[theorem]{Definition}
\theoremstyle{remark}
\newtheorem{remark}[theorem]{\sc Remark}
\newtheorem{example}[theorem]{\sc Example}
\renewcommand{\Box}{\square}    
\newcommand{\Sing}{{\rm{Sing\hspace{2pt}}}}
\renewcommand{\Re}{{\mathrm{Re}}}
\newcommand{\im}{{\rm{Im\hspace{2pt}}}}
\newcommand{\ity}{{\infty}}
\newcommand{\m}{\setminus}
\newcommand{\fin}{\hspace*{\fill}$\Box$\vspace*{2mm}}
\newcommand{\cC}{{\mathcal C}}
\newcommand{\cF}{{\mathcal F}}
\newcommand{\cO}{{\mathcal O}}
\newcommand{\cS}{{\mathcal S}}
\newcommand{\cN}{{\mathcal N}}
\newcommand{\bB}{{\mathbb B}}
\newcommand{\bR}{{\mathbb R}}
\newcommand{\bC}{{\mathbb C}}
\newcommand{\bP}{{\mathbb P}}
\newcommand{\bN}{{\mathbb N}}
\newcommand{\bQ}{{\mathbb Q}}
\begin{document}

\title[Bifurcation set of families of complex curves]{Bifurcation set of multi-parameter families of complex curves}

%

\author{\sc Cezar Joi\c ta}
\address{Institute of Mathematics of the Romanian Academy, P.O. Box 1-764,
 014700 Bucharest, Romania and Laboratoire Europ\' een Associ\'e  CNRS Franco-Roumain Math-Mode}

\email{Cezar.Joita@imar.ro}

\author{Mihai Tib\u ar}
\address{Univ. Lille, CNRS, UMR 8524 -- Laboratoire Paul Painlev\'e, F-59000 Lille,
France}  
\email{tibar@math.univ-lille1.fr}

\subjclass[2010]{14D06, 58K05, 57R45, 14P10, 32S20, 58K15}

\keywords{affine varieties, Stein spaces, fibrations, bifurcation locus of affine maps}

\thanks{The authors acknowledge the support of the Labex CEMPI
(ANR-11-LABX-0007-01). The first author acknowledges the
CNCS grant PN-III-P4-ID-PCE-2016-0330.}

\begin{abstract}
 The problem of detecting the bifurcation set of polynomial mappings $\bC^m \to \bC^k$,  $m\ge 2$, $m\ge k\ge 1$, 
 has been solved in the case $m=2$, $k=1$ only. Its solution, which goes back to the 1970s,  involves the non-constancy of the Euler characteristic of fibres. We provide here a complete answer to the general case $m= k+1 \ge 3$ in terms of  the Betti numbers of fibres and of a vanishing phenomenon discovered in the late 1990s in the real setting.
\end{abstract}

\maketitle


\section{Introduction}\label{s:intro}

The bifurcation locus of a polynomial map $f : \bC^m  \to \bC^k$ with $m\ge k$, i.e. the minimal set of points $B(f) \subset \bC^k$ outside which the mapping is a C$^\ity$ locally trivial fibration, is 
an algebraic set of dimension at most $k-1$ (cf \cite{KOS}, \cite{DRT} etc).
The following result was found in the 1970's by Suzuki \cite{Su},  and later by H\`a and L\^{e} in  \cite{HL}: 
\emph{Let $f : \bC^2 \to \bC$ be a polynomial function and let $\lambda \in \bC\m f(\Sing f)$. Then $\lambda$ is not a bifurcation value if and only if the Euler characteristic of the fibres $\chi(f^{-1}(t))$ is constant for $t$ varying in some neighborhood of $\lambda$.}

For polynomial functions of more than 2 variables,  the problem turns out to be more subtle. The Euler test is not anymore sufficient if we replace $\bC^2$ by an affine surface, as shown by Zaidenberg \cite{Za1} and Gurjar and Miyanishi \cite{GM},  or in case of polynomial maps with $m>k \ge 2$.  This led to the natural question, of which we have learned several years ago from Gurjar, whether  the constancy of the Betti numbers of the fibers was sufficient to insure topological triviality in case of regular fibres of polynomial maps $\bC^{n+1} \to \bC^{n}$, $n\ge 2$.

The aim of this paper is to characterize
the regular bifurcation values of polynomial maps $\bC^{n+1}\to\bC^n$,  $n\ge 2$, in somewhat larger generality.
We first produce an example (\S \ref{e:main} below) demonstrating that the constancy of the Betti numbers of the fibers in the neighborhood of a regular value $\lambda$ is not a sufficient condition for $\lambda$ not being a bifurcation value.   We then prove the following natural necessary and sufficient criterion:

\begin{theorem}\label{t:main1}
Let Let $f : X\to Y$ be a  morphism between two nonsingular connected affine varieties, $\dim X = n+1$, $\dim Y= n$, $n\ge 2$.
   Let $\lambda \in \im f \setminus \overline{f(\Sing f)}\subset Y$. Then  $\lambda\not\in B(f)$ if and only if
the Euler characteristic of the fibres  $f^{-1}(t)$ is constant for $t$ varying in some neighborhood of $\lambda$
and no connected component of $f^{-1}(t)$ is vanishing at infinity as $t\to\lambda$.

\end{theorem}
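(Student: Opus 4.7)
Necessity is immediate: if $\lambda \notin B(F)$ then $F$ is a locally trivial $C^{\infty}$ fibration over a neighborhood $U$ of $\lambda$, so all fibres over $U$ are homeomorphic to $F^{-1}(\lambda)$, hence share the same Euler characteristic; moreover, the trivialization identifies each connected component of $F^{-1}(t)$ with one of $F^{-1}(\lambda)$, which rules out any vanishing at infinity. The substance of the theorem is therefore the sufficiency direction.

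For sufficiency, my plan is to work on a small open polydisk $D\subset Y$ centered at $\lambda$ with $\overline D \cap \overline{F(\Sing F)} = \emptyset$, so that $F : F^{-1}(D) \to D$ is a smooth submersion whose fibres are smooth affine complex curves; the goal is to show that this submersion is $C^{\infty}$-locally trivial, possibly after shrinking $D$. The first ingredient is a compactification: choose a smooth projective completion $\overline X \supset X$ and, after resolving indeterminacies, a proper extension $\overline F : \overline X \to \overline Y$ to a completion $\overline Y \supset Y$, and set $E := \overline X \setminus X$. Over $D$ the proper map $\overline F$ has compact curves as fibres, and the ends of the affine fibres $F^{-1}(t) = \bigsqcup_i C_i(t)$ are controlled by the intersection pattern of the closures $\overline{C_i(t)}$ with $E$.

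The next step is to track how the components $C_i(t)$ behave as $t \to \lambda$. The assumption that no component vanishes at infinity is that each $\overline{C_i(t)}$ specialises to an honest component of $\overline F^{-1}(\lambda)$ meeting $X$, rather than being absorbed into $E$. Combining this with the genus--ends formula
\[
\chi(F^{-1}(t)) \;=\; \sum_i \bigl(2-2g_i(t)-r_i(t)\bigr),
\]
where $g_i(t)$ is the genus of $C_i(t)$ and $r_i(t) \ge 1$ the number of its ends, with the semicontinuity of $g_i$ and $r_i$ under specialisation furnished by $\overline F$, and with the assumed constancy of $\chi$, I expect to conclude that all the $g_i$ and $r_i$ are locally constant, so that the homeomorphism type of the fibre is constant on a neighborhood of $\lambda$.

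The decisive step is then to promote this topological constancy to an actual $C^{\infty}$ trivialization. Since $F$ is not proper, Ehresmann's theorem does not apply directly; instead I would construct a $C^{\infty}$ vector field on $F^{-1}(D)$ lifting the standard frame on $D$ whose flow lines remain inside $F^{-1}(D)$. Near each branch of $E$ lying over $D$, the lift is built from a local $\overline F$-trivialization of a tubular neighborhood of that branch in $\overline X$ (available precisely because each such branch meets $\overline F^{-1}(\lambda)$ and contributes no vanishing component), and the global lift is assembled with a partition of unity. The main obstacle is precisely this construction at infinity: it is the ``vanishing component'' phenomenon of \S\ref{e:main} that would cause the flow to escape $F^{-1}(D)$, and the non-vanishing hypothesis is used in an essential way to rule it out. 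Integrating the resulting vector field over $D$ then yields the required trivialization and concludes $\lambda \notin B(F)$.
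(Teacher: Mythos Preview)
Your overall strategy --- compactify, track the topology of the fibres via a genus--ends count, then build a trivialising vector field from local $\overline F$-trivialisations near the boundary divisor $E$ --- is genuinely different from the paper's route, which is intrinsic to the Stein setting and never compactifies. The paper first proves lower semicontinuity of $b_1$ via Runge theory (Proposition~\ref{p:bettilc}), then in the connected-fibre case applies Ilyashenko's simultaneous-universal-cover construction (Theorem~\ref{t:ilyashenko}) to the foliation by fibres, proves that this cover is a genuine covering space (Proposition~\ref{l:covering}), and invokes two results of Meigniez: a submersion with all fibres diffeomorphic to $\bR^2$ is a bundle, and a submersion of relative real dimension~$2$ that is a Serre fibration is a bundle. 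The non-connected case is reduced to this by showing, using (NV) together with constancy of $\chi$ and the semicontinuity of $b_1$, that both Betti numbers are locally constant and that the components organise into disjoint Stein families (Lemmas~\ref{l:surj} and~\ref{l:open}).

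The decisive gap in your proposal is the last step. You assert that ``near each branch of $E$ lying over $D$, the lift is built from a local $\overline F$-trivialisation of a tubular neighbourhood of that branch in $\overline X$'', and you attribute the availability of this trivialisation to the non-vanishing hypothesis. But (NV) is a purely topological statement about the affine fibres; it does not preclude $\overline F$ from having genuine critical points on $E$ over $\lambda$, nor does it force $E$ to meet $\overline F^{-1}(\lambda)$ transversally, nor does it prevent components of $E$ from being contracted by $\overline F$. Any of these would obstruct the local Ehresmann trivialisation you need, and none is ruled out by constant $\chi$ plus (NV). This is exactly the difficulty that makes the problem hard for $n\ge 2$: the hypotheses are topological, while an Ehresmann-type argument demands differential-geometric regularity at infinity that one does not have in general. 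The paper sidesteps this entirely by replacing the vector-field construction with the Ilyashenko/Meigniez machinery, which converts the topological constancy of the fibres (via their universal covers being $\bR^2$) directly into local triviality.

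A secondary point: your semicontinuity claims for $g_i$ and $r_i$ under specialisation along $\overline F$ presuppose that the compactified fibres $\overline F^{-1}(t)$ form a reasonable (e.g.\ flat, reduced, equidimensional) family over $D$, which again requires control of $\overline F$ along $E$ that you have not secured. The paper's Proposition~\ref{p:bettilc} obtains the needed semicontinuity of $b_1$ intrinsically, without any compactification.
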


 In order to prove Theorem \ref{t:main1} we show the following more general result:
 
  \begin{theorem}\label{t:main1g}
 Let $p:M\to B$ be a  holomorphic map between connected complex manifolds, where $M$ is Stein and $\dim M=\dim B+1$.
We asume that the Betti numbers  $b_0(t)$ and $b_1(t)$ of all the fibers  $p^{-1}(t)$  are finite.
 Let $\lambda \in \im p \setminus \overline{p(\Sing p)}\subset B$. Then  $\lambda\not\in B(p)$ 
if and only if
the Euler characteristic of the fibres is constant for $t$ varying in some neighborhood of $\lambda$
and no connected component of $p^{-1}(t)$ is vanishing at infinity when $t\to\lambda$.
\end{theorem}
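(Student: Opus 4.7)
The $(\Rightarrow)$ direction is immediate: if $p$ is locally trivial near $\lambda$, all fibres over a neighbourhood of $\lambda$ are pairwise diffeomorphic, so $\chi$ is constant and no component of $p^{-1}(t)$ can escape to infinity (this would destroy diffeomorphism type). For the $(\Leftarrow)$ direction, pick an open polydisc $D\ni\lambda$ with $\overline{D}\subset \im p\setminus\overline{p(\Sing p)}$, so that $p_D:=p|_{p^{-1}(D)}\colon p^{-1}(D)\to D$ is a holomorphic submersion whose fibres are smooth open Riemann surfaces with finite $b_0,b_1$. The goal is to construct a $C^\infty$ trivialization $p^{-1}(D)\cong p^{-1}(\lambda)\times D$.

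The plan is a controlled Ehresmann argument using a Stein exhaustion. Let $\varphi:M\to[0,\infty)$ be a smooth strictly plurisubharmonic exhaustion and fix a regular value $c$ (generic and large enough) so that the compact hypersurface $\Sigma_c:=\{\varphi=c\}$ is transverse to the fibres of $p_D$ over $\overline{D}$. Then $\pi_c:\Sigma_c\cap p^{-1}(D)\to D$ is a proper smooth submersion whose fibre over $t$ is a finite disjoint union of circles, one per end of $p^{-1}(t)$ truncated at level $c$. If one can show that $\pi_c$ is a locally trivial $S^1$-bundle near $\lambda$, then the standard Ehresmann theorem for manifolds with boundary, applied to the compact fibred pair $(\{\varphi\le c\}\cap p^{-1}(\overline D),\Sigma_c\cap p^{-1}(\overline D))\to\overline D$, trivializes the compact part; meanwhile the complementary piece $\{\varphi\ge c\}\cap p^{-1}(D)\to D$ consists of cylindrical ends parametrized by $t$, which can be trivialized directly (each end is a punctured disk depending holomorphically on $t$) and matched with the interior trivialization along $\Sigma_c\cap p^{-1}(D)$ via a partition of unity in $\varphi$.

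The crux, and the main obstacle, is showing that no topology is hidden at infinity, i.e.\ that the number of boundary circles of $\pi_c$ is locally constant. Here the two hypotheses must be used simultaneously. The non-vanishing-component hypothesis prevents components of $p^{-1}(t)$ from being born out of infinity as $t\to\lambda$, giving an upper-semicontinuity estimate on $b_0$ from above in a neighbourhood of $\lambda$; combined with the general lower-semicontinuity of $b_0$ for a submersion, one obtains that $b_0(t)$ is constant near $\lambda$. The constancy of $\chi=b_0-b_1$ then forces $b_1(t)$ to be locally constant as well. With both Betti numbers locally constant and the fibres being open curves, a Morse-theoretic analysis of $\varphi$ along each fibre (for large $c$, no new handles can appear in $\{\varphi\le c\}\cap p^{-1}(t)$ as $t$ varies, otherwise either $b_1$ or the number of ends would jump) yields that the ends of $p^{-1}(t)$ are equinumerous and depend continuously on $t$, so $\pi_c$ is locally trivial.

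The delicate point I expect to wrestle with is the simultaneous control of $b_0$ and the number of ends: a priori both can oscillate even when $\chi$ is constant (a lost component can be compensated by an extra end), and ruling this out requires the Steinness of $M$ in an essential way --- it provides the plurisubharmonic exhaustion $\varphi$ whose sublevels are Runge and whose restriction to each fibre is a proper exhaustion, so that the ``ends at infinity'' of the fibres are faithfully seen by $\Sigma_c$ for $c\gg 0$. Once this is established, all remaining steps (Ehresmann, end-wise trivialization, gluing) are standard.
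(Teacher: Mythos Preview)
Your sketch has the right overall shape but contains two genuine gaps, one of which is the heart of the matter.

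First, the claimed ``general lower-semicontinuity of $b_0$ for a submersion'' is false. Take $p:\bC^2\to\bC$, $p(x,y)=x(1-xy)$: this is a submersion everywhere, $p^{-1}(0)=\{x=0\}\sqcup\{xy=1\}$ has $b_0=2$, while $p^{-1}(t)=\{(x,(x-t)/x^2):x\neq 0\}\cong\bC^*$ has $b_0=1$ for $t\neq 0$, and (NV) visibly holds at $0$. So (NV) alone does not pin down $b_0$. The paper proceeds differently: it first proves that $b_1$ is \emph{lower} semicontinuous (Proposition~\ref{p:bettilc}), a nontrivial fact whose proof uses that sublevel sets of a plurisubharmonic exhaustion are Runge, whence $H_1(\{\varphi<c\}\cap p^{-1}(t),\bQ)\hookrightarrow H_1(p^{-1}(t),\bQ)$. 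Combining $b_1(t)\ge b_1(\lambda)$ with $\chi$ constant gives $b_0(t)\ge b_0(\lambda)$; then (NV) supplies $b_0(t)\le b_0(\lambda)$, and both Betti numbers are constant.

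Second, and more seriously, your trivialization step does not close. Granting constancy of $b_0,b_1$ and even a level $c$ with $\Sigma_c$ transverse to all nearby fibres, you are left with trivializing the family of ``cylindrical ends'' $\{\varphi\ge c\}\cap p^{-1}(D)\to D$. But this is itself a non-proper submersion with noncompact fibres; asserting that ``each end is a punctured disk depending holomorphically on $t$, trivializable directly'' is exactly the kind of statement the theorem is meant to establish. A partition-of-unity gluing does not produce a complete lifting vector field on a noncompact total space. The paper avoids this entirely by a different route: after reducing to connected fibres (showing the components organize into open Stein pieces $V_j$ over a neighbourhood of $\lambda$, Lemma~\ref{l:open}), it invokes Ilyashenko's theorem to build the simultaneous fibrewise universal cover $\widetilde M\to M$, proves this is a genuine covering (again using the Runge argument to identify $\pi_1(M_{b,R})\cong\pi_1(M_b)$), and then applies two results of Meigniez: a submersion with all fibres diffeomorphic to $\bR^2$ is a locally trivial bundle, and a submersion that is a Serre fibration in real codimension~$2$ is a locally trivial bundle. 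These external inputs are doing the real work that your ``Morse-theoretic analysis'' and ``direct trivialization of ends'' leave unaddressed.
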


 This is in turn based on the following result for  connected fibres, the proof of which relies on deep results by Ilyashenko  \cite{Il, Il1} and Meigniez \cite{Me}.


\begin{theorem}\label{t:main2}
Let $p:M\to B$ be a  holomorphic map between connected complex manifolds which is a surjective submersion, where $M$ is Stein with $\dim M=\dim B+1$.  Assume that all the fibres of $p$ are connected and have the same finite first Betti number $b_1<\infty$.

Then $p$ is a  locally trivial fibration. 
\end{theorem}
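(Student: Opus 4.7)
\emph{Proof strategy.} The problem is local over $B$, so fix $\lambda_0 \in B$ and aim to trivialize $p$ over a polydisc neighborhood $V$ of $\lambda_0$. Because $p$ is a submersion and $M$ is Stein, each fibre $F_t := p^{-1}(t)$ is a smooth closed analytic subspace of $M$, hence itself Stein; being a connected $1$-dimensional Stein manifold, $F_t$ is a non-compact Riemann surface. An open connected surface with finitely generated $H_1$ has free, finitely generated $\pi_1$, and therefore has finite topological type: $F_t$ is homeomorphic to a closed genus-$g(t)$ surface with $e(t) \geq 1$ punctures, and $b_1(t) = 2g(t) + e(t) - 1$.

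\textbf{Step 1} is to show that $g(t)$ and $e(t)$ are locally constant at $\lambda_0$. I would exhaust $M$ by a nested sequence of relatively compact Stein open sets $U_k$ with smooth strongly pseudoconvex boundary. For each $k$ the restriction $p \colon p^{-1}(V) \cap \overline{U_k} \to V$ is a proper submersion with boundary, so by Ehresmann's theorem with corners it is $C^\infty$-trivial over a shrunken $V_k \ni \lambda_0$; this identifies a thick compact subsurface-with-boundary $S_k(t) \subset F_t$ with a fixed $S_k \subset F_{\lambda_0}$. The constancy of $b_1(F_t)$ then forbids any cycle of $F_{\lambda_0}$ from escaping to infinity along $t \to \lambda_0$; combined with Ilyashenko's theorems \cite{Il, Il1} on the continuity of invariants in analytic families of open Riemann surfaces, this forces both $g$ and $e$ to be independent of $t$ in a neighborhood of $\lambda_0$.

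\textbf{Step 2.} Once the topological type of the fibre is constant, Meigniez's criterion \cite{Me} characterizes when a submersion is a locally trivial fibration in homotopy- and end-theoretic terms. After shrinking $V$ so that $p^{-1}(V)$ deformation retracts onto the central fibre $F_{\lambda_0}$ — which is possible thanks to Step~1, because the retractions onto the $S_k$ extend coherently as $k \to \infty$ — every inclusion $F_t \hookrightarrow p^{-1}(V)$ becomes a homotopy equivalence and the ends of $F_t$ vary continuously with $t$. Meigniez's theorem then yields a $C^\infty$-trivialization $p^{-1}(V) \simeq V \times F_{\lambda_0}$, proving the claim.

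\textbf{Main obstacle.} The entire content lies in Step~1: $p$ is a submersion but far from proper, so Ehresmann's theorem gives nothing globally on the non-compact fibres. What has to be ruled out is the vanishing-at-infinity phenomenon mentioned in the introduction — handles or punctures of $F_t$ collapsing into the ideal boundary as $t \to \lambda_0$. The constancy of $b_1$ is what rigidifies the ends against such degeneration; converting this Betti rigidity into an actual constancy of the moduli of the fibres is the role of Ilyashenko's analytic continuity theorems, after which Meigniez's criterion can promote the resulting topological rigidity to the desired local triviality.
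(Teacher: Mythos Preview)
Your proposal has the right two names attached to it but misidentifies what each result actually says, and the architecture connecting them is missing.

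\textbf{Ilyashenko.} The theorems \cite{Il,Il1} are not ``continuity of invariants'' statements. What Ilyashenko constructs, given a Stein manifold foliated by curves and a transversal $\widehat B$, is a single complex manifold $\widetilde M$ with a locally biholomorphic map $\pi:\widetilde M\to M$ whose restriction over each leaf is the universal covering of that leaf. Nothing about $g(t)$ or $e(t)$ being locally constant falls out of this, and in fact the paper never proves that directly. What the constant-$b_1$ hypothesis is actually used for is to upgrade $\pi$ from ``locally biholomorphic'' to an honest covering map. This is the real work: one shows, via Runge arguments (injectivity of $H_1$ for sublevel sets of plurisubharmonic exhaustions, Propositions~\ref{p:runge} and \ref{p:rungeinclusion}), that a suitable truncation $M_{b,R}\hookrightarrow M_b$ induces an isomorphism on $\pi_1$ for all $b$ near $\lambda_0$, and then checks by hand that Ilyashenko's $\widetilde M$ is evenly covered over small open sets.

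\textbf{Meigniez.} Once $\pi:\widetilde M\to M$ is a covering, the composite $p\circ\pi:\widetilde M\to B$ is a submersion whose fibres are simply connected non-compact Riemann surfaces, hence diffeomorphic to $\bR^2$. Meigniez \cite[Corollary~31]{Me} then says $p\circ\pi$ is a locally trivial bundle. Now both $\pi$ and $p\circ\pi$ are Serre fibrations, so $p$ is a Serre fibration; and Meigniez \cite[Corollary~32]{Me} promotes a submersive Serre fibration in real codimension~$2$ to a locally trivial bundle. Your invocation of ``Meigniez's criterion in homotopy- and end-theoretic terms'' is too vague to be checkable, and the specific hypothesis you propose to verify --- that $p^{-1}(V)$ deformation retracts onto $F_{\lambda_0}$ --- is essentially the conclusion. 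Saying the retractions onto the $S_k$ ``extend coherently as $k\to\infty$'' is exactly where the non-properness bites, and you have not supplied a mechanism for it; the constant-$b_1$ hypothesis does not by itself give you control on how the ends of $F_t$ sit inside $M$ as $t$ varies.

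In short: the route is Ilyashenko's simultaneous universal cover $\Rightarrow$ prove it is a genuine covering (this is where $b_1$ constant enters, via Runge theory) $\Rightarrow$ Meigniez on $\bR^2$-fibred submersions $\Rightarrow$ Serre fibration descent $\Rightarrow$ Meigniez again. Your Step~1 as written does not establish what is needed, and your Step~2 assumes it.
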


The condition of non-vanishing components at infinity  is then employed in order to reduce the general case to Theorem \ref{t:main2}.

 We also note the following by-product of Theorem \ref{t:main2}  in the particular case of regular functions on affine surfaces, which is a small part of a result proved by Zaidenberg
\cite[Lemma 3.2]{Za1}: 

\emph{Let $f: X \to C$ be a primitive\footnote{One says that a map $f$ is \emph{primitive} if its general fibre is connected, see e.g. \cite{Za1, PT}.}  function from an affine surface $X$ to a curve $C$.
   A regular value $\lambda\in C$ is a bifurcation value of $f$ if and only if the fibre $f^{-1}(\lambda)$ is not diffeomorphic to the general fibre of $f$.}
   \smallskip
   
 An example of a non-primitive function with regular values in some neighborhood of $0$ and where the fibres are isomorphic, but which is not a fibration, is the following:
 $X\subset \bC^3$ is the surface given by the equation $[(x-1)(xz+y^2)+1][x(xz+y^2)-1] =0$ (see the main example \S \ref{e:main}),  and consider $f: X \to \bC$ as the restriction to $X$ of the linear form $l=x$ on $\bC^3$. 
  
  \section{Vanishing at infinity and the main example}\label{s:example}

 To define ``vanishing components'', we consider the more general situation of a holomorphic map $p:M\to B$ between  connected complex manifolds with $\dim M=\dim B+1$, and let $\lambda \in \im p$ be a regular value of $p$.
The fiber $p^{-1}(t)$ is a complex manifold of dimension 1 and may be not  connected;  we assume that it has finitely many  connected components.  We then denote by $C_t$ some  connected component of the fiber $p^{-1}(t)$.

\begin{definition}\label{d:vanishing}
 We say that there are \emph{vanishing components at infinity when $t$ tends to $\lambda$} if there 
 is a sequence of points $t_k \in B$,  $t_k\to \lambda$  such that for some choice of a connected component $C_{t_k}$ of $p^{-1}(t_k)$ the sequence of sets $\{C_{t_k}\}_{k\in \bN}$ is \emph{locally finite}, i.e., for any compact $K\subset M$, there is an integer $p_K\in \bN$ such that $\forall q \ge p_K$, $C_{t_q}\cap K = \emptyset$.
 
 If there is no vanishing at infinity at $\lambda$ then we say that \emph{no  connected component of $p^{-1}(t)$ is vanishing at infinity when $t\to\lambda$}, or simply that \emph{one has the property  (NV) at $\lambda$}.
\end{definition}

A more effective way to define the property  (NV) is as follows:
 we denote by $M_b$ the fibre $p^{-1}(b)$. 
Let  $M_b = \sqcup_j M_b^j$ be the decomposition of the fibre $M_b$ into connected components.
Let also $\varphi:M\to\bR$ be a continuous exhaustion function, i.e. $\{x\in M:\varphi(x)\leq r\}$
is compact for every $r\in\bR$ (for example, if $M=\bC^n$, one we may take $\varphi(x)=\|x\|$). We 
define:
\[ \mu(b)  := \max_j \inf_{x\in M_b^j} \varphi(x) \]
Then ``vanishing component at infinity when $t\to \lambda$'' means that there exists 
  a sequence of points $t_k \in B$,  $t_k\to \lambda$, such that $\lim_{k\to \infty} \mu(t_k) = \infty$.

The phenomenon of ``vanishing of components'' has been studied in the real setting of polynomials $\bR^2 \to \bR$ in \cite{TZ} and more generally in \cite{JT}. It turns out that this is related to the ``vanishing cycles'' and ``emerging cycles'' introduced for independent reasons by Meigniez \cite{Me0}, \cite{Me}. In another stream, ``vanishing cycles at infinity'' have been studied in the context of complex polynomial functions in \cite{Pa}, \cite{ST}, \cite{ST-mon}, \cite{Ti-hyp}, \cite{Ti-book} etc.

 
 A polynomial map $\bC^3 \to \bC^2$ with vanishing components and constancy of the Euler characteristic of regular fibres was produced in \cite[Remark 2.3]{HN}.
 
 Yet another phenomenon which may occur is the ``splitting at infinity at $\lambda$''  (in the terminology of \cite{TZ, JT}) when approaching the bifurcation value $\lambda$. The simplest example
is the complex polynomial $f(x,y) = x+x^2y$. 
  It was shown in the real setting \cite[Example 3.1]{TZ} that these two phenomena (vanishing and splitting at infinity) may happen simultaneously while the Betti numbers of the fibres are locally constant. 
  
 We present here the first example in the complex setting where both phenomena occur. 
 It proves at the same time that the non-vanishing hypothesis of Theorem \ref{t:main1} is necessary.

\subsection{Main example}\label{e:main}

Let $f:\bC^3\to\bC^2$ be defined by $$f(x,y,z)=\left(x,[(x-1)(xz+y^2)+1][x(xz+y^2)-1]\right).$$

The singular locus of $f$ is the union of the $z$-axis, a curve and a surface, the last two having the same image by $f$. 
One then checks that $(0,0)$ is an interior point of $\bC^2\setminus \Sing f$.

If $(a,b)\in\bC^2$ is close enough to $(0,0)$ and $a\neq 0$ then $f^{-1}(a,b)=
\{(x,y,z)\in\bC^3 \mid x=a,\ az+y^2=c_1\}\cup\{(x,y,z)\in\bC^3 \mid  x=a,\ az+y^2=c_2\}$
where $c_1$ and $c_2$ are
the two distinct roots of the equation 
$[(a-1)\lambda+1][a\lambda-1]=a(a-1)\lambda^2+\lambda -1=b$, namely:
 \[ c_{1,2}=\frac{1}{2a(a-1)}\left( -1\pm \sqrt{1+4a(a-1)(b+1)}\right) .
 \]
Therefore $f^{-1}(a,b)\simeq \bC \sqcup \bC$.


If  $b$ is close enough to zero then $f^{-1}(0,b)=\{(x,y,z)\in\bC^3 \mid x=0,\ y=d_1\}\cup\{(x,y,z)\in\bC^3 \mid x=0,\ y=d_2\}$ where $d_1$ and $d_2$ are the two roots of $y^2=b+1$; thus
$f^{-1}(0,b)\simeq \bC \sqcup \bC$.

All fibers of $f$ in a small neighborhood of $(0,0)\in\bC^2$   have therefore exactly
two connected components isomorphic to $\bC$. 

On the other hand, when
$a\to 0$ one has $c_1\to b+1$, $c_2\to \infty$, and the component corresponding to $c_1$
splits into two copies of $\bC$ whereas the component corresponding to $c_2$ vanishes at infinity.

This shows that $f$ is not locally trivial at $(0,0)$ even if its fibres are abstractly diffeomorphic  (and even isomorphic).




\section{Preliminary results}\label{s:prelim}


We first recall basic definitions, see e.g. H\" ormander \cite{Ho}, and a couple of results that we need later: 

\begin{definition}\label{d:stein}
\noindent
(a).  A complex manifold $X$ is called \emph{Stein} if there exists a smooth strictly plurisubharmonic exhaustion function $\varphi:X\to \bR$.

\noindent
(b).  Suppose that $X$ is a Stein complex manifold and $D$ is a Stein open subset of $X$.  Then $D$ is called
\emph{Runge in $X$} if the restriction morphism ${\cO}(X)\to {\cO}(D)$ has dense image in the topology of uniform convergence on compacts.
\end{definition}

\begin{proposition}\label{p:runge}\cite{Na2}
If $X$ is a Stein manifold and $\varphi:X\to \bR$ is a continuous plurisubharmonic function, then for
every $R>0$, the open subset of $X$ defined by $\{x\in X \mid \varphi(x)<R\}$ is Stein and Runge in 
$X$. \fin
\end{proposition}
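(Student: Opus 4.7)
The plan is to verify the Stein and Runge properties separately, using as input only the smooth strictly plurisubharmonic exhaustion $\psi : X \to \bR$ supplied by the Stein hypothesis on $X$, together with two standard tools: Richberg's smoothing theorem and the Oka--Weil approximation theorem.

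For the Stein property of $D_R := \{x \in X : \varphi(x) < R\}$, I would construct a smooth strictly plurisubharmonic exhaustion of $D_R$. Pick a smooth convex nondecreasing function $\chi : (-\infty, R) \to \bR$ with $\lim_{t \to R^-}\chi(t) = +\infty$, for instance $\chi(t) = 1/(R-t)$. Since $\varphi$ is continuous and plurisubharmonic on $X$ and $\chi$ is convex nondecreasing, the composition $\chi\circ\varphi$ is continuous and plurisubharmonic on $D_R$. Hence $u := \psi + \chi\circ\varphi$ is continuous, strictly plurisubharmonic (because $\psi$ is), and exhausting on $D_R$: if $x$ leaves every compact subset of $D_R$, then either $\psi(x)\to\infty$ or $\varphi(x)\to R$, and in both cases $u(x)\to\infty$. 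To upgrade to smoothness I would invoke Richberg's theorem: any continuous strictly plurisubharmonic function on a complex manifold can be approximated from above by a smooth strictly plurisubharmonic function, with control of the error. This yields the required smooth strictly plurisubharmonic exhaustion of $D_R$, so $D_R$ is Stein.

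For the Runge property I would appeal to the well-known criterion that a Stein open subset $D$ of a Stein manifold $X$ is Runge in $X$ if and only if, for every compact $K \subset D$, the holomorphically convex hull $\widehat K_X$ in $X$ is already contained in $D$; this criterion is a direct consequence of Oka--Weil approximation. Fix a compact $K \subset D_R$ and set $M := \max_K \varphi < R$. On a Stein manifold, the holomorphically convex hull of $K$ coincides with its plurisubharmonically convex hull, so any $x \in \widehat K_X$ satisfies $v(x) \leq \max_K v$ for every plurisubharmonic function $v$ on $X$. Applying this with $v = \varphi$ gives $\varphi(x) \leq M < R$, whence $\widehat K_X \subset D_R$, and Runge-ness of $D_R$ follows.

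The step where the real work is concealed is the identification $\widehat K_X = \widehat K_X^{\mathrm{psh}}$ on a Stein manifold, which rests on the existence of sufficiently many global holomorphic functions and on Oka--Weil approximation; once this is granted, the only remaining subtlety is the Richberg smoothing used in the Stein half. Both ingredients are classical and fit standard templates, so I expect no new difficulty specific to this proposition.
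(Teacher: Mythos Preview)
The paper does not actually prove this proposition: it is stated with a citation to Narasimhan \cite{Na2} and closed immediately with the end-of-proof symbol, so there is no argument in the paper to compare against. Your sketch is correct and follows the standard route; the only minor remark is that once you have the \emph{continuous} strictly plurisubharmonic exhaustion $u=\psi+\chi\circ\varphi$ on $D_R$, you may bypass Richberg's smoothing by invoking Narasimhan's own solution of the Levi problem (which is precisely the content of \cite{Na2}): a complex manifold admitting a continuous strictly plurisubharmonic exhaustion is Stein. The Runge half, via the coincidence of the holomorphic and plurisubharmonic hulls on a Stein manifold and Oka--Weil, is exactly the classical argument.
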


\begin{proposition}
\footnote{has been extended to singular complex spaces 
of dimension 1 in \cite{Mi}.}\label{p:rungeinclusion}\cite{BS}
If $\cS$ is an open Riemann surface and $D$ is an open subset, then $D$ is Runge in $\cS$
if and only if the morphism $H_1(D,G)\to H_1({\cS},G)$ induced by the inclusion $D\hookrightarrow {\cS}$ is injective for all abelian groups $G$.  \fin
\end{proposition}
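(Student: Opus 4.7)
The plan is to split the proof into a classical geometric characterization and a homological translation. The geometric characterization, due to Behnke--Stein, asserts that $D$ is Runge in $\cS$ if and only if $\cS\setminus D$ has no connected component that is relatively compact in $\cS$; I will take this as known input. It then remains to show that this geometric condition is equivalent to the homological condition of the statement, and then to upgrade from integer coefficients to arbitrary abelian coefficients.

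For the direction ``no compact components of $\cS\setminus D$ $\Rightarrow$ injectivity with $\bZ$-coefficients'', let $\gamma\subset D$ be a 1-cycle bounding in $\cS$. Using the structure of oriented 2-manifolds (and a smoothing/transversality argument near $\partial D$), $\gamma$ bounds a compact bordered subsurface $\Sigma\subset\cS$ with $\partial\Sigma=\gamma$. The set $\Sigma\cap(\cS\setminus D)$ is compact and lies in $\cS\setminus D$, hence is contained in a finite union of connected components of $\cS\setminus D$, each of which would therefore be relatively compact in $\cS$. By hypothesis there are no such components, so $\Sigma\subset D$, proving $[\gamma]=0$ in $H_1(D,\bZ)$.

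For the converse ``compact component $\Rightarrow$ non-injectivity with $\bZ$-coefficients'', pick a relatively compact component $K$ of $\cS\setminus D$ and choose a compact bordered subsurface $\Sigma\subset\cS$ with $K\subset\mathrm{int}(\Sigma)$ and $\Sigma\cap(\cS\setminus D)=K$, so that $\gamma:=\partial\Sigma\subset D$. Then $\gamma$ bounds $\Sigma$ in $\cS$, giving $[\gamma]=0\in H_1(\cS,\bZ)$; but $[\gamma]\neq 0$ in $H_1(D,\bZ)$, as one sees by pairing $[\gamma]$ with the locally defined closed real $1$-form $d\arg(z-p)/(2\pi)$ in a coordinate chart about a point $p\in K$, which yields winding number $1$. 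Thus injectivity already with $G=\bZ$ forces the geometric condition, and together with Behnke--Stein this closes both directions at the $\bZ$-level.

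The main obstacle I foresee is the upgrade from $\bZ$-coefficients to arbitrary abelian $G$. Since an open Riemann surface has the homotopy type of a 1-dimensional CW complex, $H_1(\cS,\bZ)$ and $H_1(D,\bZ)$ are free abelian, and by the universal coefficient theorem the injectivity of $H_1(D,G)\to H_1(\cS,G)$ for every $G$ is equivalent to the cokernel $H_1(\cS,\bZ)/H_1(D,\bZ)$ being torsion-free. To establish this under the ``no compact components'' hypothesis, I would exhaust $\cS$ by a nested sequence of compact bordered subsurfaces $\Sigma_n$ whose boundaries lie in $D$ (possible because every point of $\cS\setminus D$ can be connected to infinity in $\cS\setminus D$); the restriction $\Sigma_n\cap D$ is obtained from $\Sigma_n$ by removing open bordered subsurfaces with boundary circles that are non-separating in $\Sigma_n$ (otherwise a compact component of $\cS\setminus D$ would appear), so a symplectic handle basis of $\Sigma_n\cap D$ extends to one of $\Sigma_n$ without divisibility. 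Passing to the direct limit exhibits $H_1(D,\bZ)$ as a pure (indeed direct summand) subgroup of $H_1(\cS,\bZ)$, and the universal coefficient theorem then gives injectivity for every $G$.
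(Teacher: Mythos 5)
You take as input the geometric Behnke--Stein criterion ($D$ Runge in $\cS$ iff $\cS\setminus D$ has no compact connected component) and translate it homologically; since the paper itself only quotes [BS] and gives no proof, the only question is whether your translation is correct, and as written it has two genuine gaps. The first is the claim that a $1$-cycle $\gamma\subset D$ which is null-homologous in $\cS$ bounds an embedded compact subsurface $\Sigma\subset\cS$ with $\partial\Sigma=\gamma$: this is false in general. Take $\cS=\bC$ and $\gamma$ the sum of two concentric circles, both counterclockwise; $[\gamma]=0$ in $H_1(\bC;\bZ)$, but a subsurface with this oriented boundary would have to contain the points just inside the outer circle and omit the points just outside the inner one, which is impossible since the open annulus between them is connected and misses $\gamma$. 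The step is repairable: write $\gamma=\partial c$ with $c$ a compactly supported $2$-chain, let $u$ be its integer multiplicity function (locally constant off $\gamma$, zero off a compact set, jumping by $\pm 1$ across each curve); the closures of the level sets $\{u\ge k\}$ and $\{u\le -k\}$ are compact subsurfaces whose boundaries are sub-collections of $\gamma$, hence lie in $D$, and $\gamma$ is the sum of these boundaries, after which your (correct) trapping argument -- a component of $\cS\setminus D$ meeting such a subsurface but not its boundary lies in its interior, hence is compact -- applies to each piece. In the converse direction, the normalization $\Sigma\cap(\cS\setminus D)=K$ cannot always be arranged (other components of $\cS\setminus D$ may accumulate near $K$), though it is not needed; and pairing with a form defined only in a coordinate chart is not meaningful for a class of $H_1(D)$ -- you must first globalize the winding form to $\cS\setminus\{p\}$ (possible since $H^2_{dR}(\cS)=0$), or argue with the local degree at $p$.

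The more serious gap is the upgrade to arbitrary coefficients. The exhaustion of $\cS$ by compact bordered subsurfaces $\Sigma_n$ with $\partial\Sigma_n\subset D$ simply does not exist unless $\cS\setminus D=\emptyset$: under the very hypothesis you are using, every nonempty component of $\cS\setminus D$ is non-compact, hence unbounded, so as soon as $\operatorname{int}\Sigma_n$ meets it, it must cross $\partial\Sigma_n$ (for $\cS=\bC$, $D=\bC\setminus\bR$, no compact subsurface whose interior meets $\bR$ has boundary in $D$); indeed your own trapping argument shows that $\partial\Sigma_n\subset D$ would force $\Sigma_n\subset D$. So the purity of the image of $H_1(D;\bZ)$ in $H_1(\cS;\bZ)$, and with it injectivity for general $G$, is left unproved (the UCT reduction itself is fine, since these groups are free abelian). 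A clean way to get all $G$ at once is duality: $H_2(\cS;G)=0$ because $\cS$ is a non-compact surface, so injectivity of $H_1(D;G)\to H_1(\cS;G)$ is equivalent to $H_2(\cS,D;G)=0$, and Alexander--Lefschetz duality identifies $H_2(\cS,D;G)$ with the \v{C}ech cohomology with compact supports $\check H^0_c(\cS\setminus D;G)$, which vanishes for every $G$ exactly when $\cS\setminus D$ has no compact connected component; this replaces both your $\bZ$-argument and the purity step and plugs directly into the geometric criterion you assume. As it stands, the overall plan is viable but the bounding-subsurface step is false as stated and the coefficient upgrade rests on an impossible construction.
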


We prove the following key semi-continuity result:

\begin{proposition}\label{p:bettilc}
Let $p:M \to B$ be a holomorphic submersion, where $M$ and $B$ are connected complex manifolds and $\dim M=\dim B+1$. 
Assume that:
\begin{enumerate}
\rm \item \it  $M$ is Stein, and
\rm \item \it    the Betti numbers   $b_0(t)$ and $b_1(t)$ of the fiber $p^{-1}(t)$ are finite for all $t\in B$.
\end{enumerate}
Then the top Betti number $b_1(t)$ is lower semi-continuous.
\end{proposition}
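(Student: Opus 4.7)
My plan is to compare $H_1(F_t)$ with $H_1(F_\lambda)$, where $F_t:=p^{-1}(t)$, by realising both as direct limits of sublevel sets of a plurisubharmonic exhaustion and then transferring one such sublevel across fibers via Ehresmann's fibration theorem for manifolds with boundary. Note first that each $F_t$ is Stein (a closed submanifold of the Stein manifold $M$) and of complex dimension one, so under the hypothesis $b_0(t),b_1(t)<\infty$ it is a finite disjoint union of open Riemann surfaces of finite topological type; in particular each $H_1(F_t)$ is free abelian of rank $b_1(t)$.

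I would fix a smooth strictly plurisubharmonic exhaustion $\varphi:M\to\bR$, set $M_R:=\{\varphi<R\}$ and $F_{t,R}:=F_t\cap M_R$. Since $\varphi|_{F_t}$ is a plurisubharmonic exhaustion of the Stein manifold $F_t$, Proposition \ref{p:runge} (applied componentwise) shows that $F_{t,R}$ is Runge in $F_t$, and then Proposition \ref{p:rungeinclusion} yields an injection $H_1(F_{t,R})\hookrightarrow H_1(F_t)$; in particular $b_1(F_{t,R})\le b_1(F_t)$ for every $t$ and every $R$. Because $F_\lambda=\bigcup_R F_{\lambda,R}$ and singular homology commutes with directed unions of open subsets, the hypothesis $b_1(F_\lambda)<\infty$ lets me choose $R$ large enough that $b_1(F_{\lambda,R})=b_1(F_\lambda)$. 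By Sard's theorem applied to $\varphi$ and to $\varphi|_{F_\lambda}$, I may additionally require $R$ to be a regular value of both, so that $\overline{M_R}=\{\varphi\le R\}$ is a compact manifold with boundary and $\{\varphi=R\}\cap F_\lambda$ is a compact real $1$-submanifold of $F_\lambda$.

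The concluding step is to apply Ehresmann's fibration theorem for manifolds with boundary to the proper smooth map $p:\overline{M_R}\cap p^{-1}(U)\to U$ over a small neighbourhood $U$ of $\lambda$. On the interior $p$ is a submersion by hypothesis; on the boundary $\{\varphi=R\}\cap p^{-1}(U)$ the submersion of $p|_{\{\varphi=R\}}$ at a point $x$ amounts to $d\varphi|_{T_xF_{p(x)}}\ne 0$, and the ``bad'' subset $S:=\{x\in\{\varphi=R\}:d\varphi|_{T_xF_{p(x)}}=0\}$ is closed, hence compact, with $\lambda\notin p(S)$ by the regular-value choice; shrinking $U$ makes $p^{-1}(U)\cap S$ empty. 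Ehresmann then yields a diffeomorphism $\overline{F_{t,R}}\cong\overline{F_{\lambda,R}}$ for every $t\in U$, so $b_1(F_{t,R})=b_1(F_{\lambda,R})=b_1(F_\lambda)$, which together with the Runge inequality $b_1(F_t)\ge b_1(F_{t,R})$ gives $b_1(F_t)\ge b_1(F_\lambda)$ on $U$, i.e.\ lower semicontinuity. The hardest part I foresee is the uniform boundary-submersion check: propagating the transversality of $\varphi|_{F_\lambda}$ at level $R$ to the nearby fibers is precisely what bridges the Runge-type control of each individual fiber with the smooth comparison across fibers that Ehresmann requires; everything else is formal once that uniform regular-value statement is in place.
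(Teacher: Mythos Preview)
Your argument is correct and in fact slightly more economical than the paper's for this particular proposition. Both proofs hinge on the same two ingredients: the Runge/Behnke--Stein fact that sublevel sets of a plurisubharmonic exhaustion inject on $H_1$ into the ambient open curve (Propositions \ref{p:runge} and \ref{p:rungeinclusion}), and Ehresmann's theorem to transport such a sublevel diffeomorphically to nearby fibres. The difference lies in the packaging. You work with a single global strictly plurisubharmonic exhaustion $\varphi$ on $M$ and one sublevel $M_R$, and you handle the possible disconnectedness of $F_t$ simply by applying the Runge injection componentwise. The paper instead first separates the connected components $C_1,\dots,C_{b_0}$ of $p^{-1}(\lambda)$ into disjoint Stein Runge open sets $U_j$ by means of a holomorphic function constant on each $C_j$, chooses a separate exhaustion $\varphi_j$ on each $U_j$, arranges (via an additional plurisubharmonic correction) that the relevant sublevel meets $C_j$ in a \emph{connected} set, and finally invokes Narasimhan's lemma \cite[Lemma~1]{Na} to show that the disjoint union of the resulting pieces is still Runge in each component $\Gamma_k$ of the nearby fibre.

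For Proposition \ref{p:bettilc} alone your route avoids both the separation-by-holomorphic-function step and Narasimhan's lemma, and the compactness argument you give for propagating the transversality of $\varphi|_{F_\lambda}$ at level $R$ to nearby fibres is clean and correct. The paper's more elaborate construction is not wasted, however: it is reused verbatim in the proof of Proposition \ref{l:covering} and again in Corollary \ref{t:main3}, where having the components already isolated in Runge neighbourhoods with connected sublevel traces is exactly what is needed. So the trade-off is local simplicity versus downstream reusability.
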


 This looks similar to  semi-continuity results proved in case of families of hypersurfaces in  \cite{ST-defo}, \cite[Prop. 2.1]{ST-betti}, however the method of proof is totally different. 


\begin{proof}[Proof of Proposition \ref{p:bettilc}]


Let ${\mathcal C}_\lambda=\{C_1,C_2,\dots, C_{b_0}\}$ denote the set of  connected components of the fibre $p^{-1}(\lambda)$, where $b_0:=b_0(\lambda)$, for  some fixed value $\lambda\in B$.

Let $f:M\to\bC$ be a holomorphic function such that
$f_{|C_j}\equiv 2j$, for any $j\in\{1,\dots,b_0\}$.  Such functions do exist because $M$ is Stein, see e.g.  \cite[Theorem 7.4.8]{Ho}.   
We define the following disjoint open subsets of $M$:\\
$U_1=\{x\in M \mid \Re(f(x))<3\}$, $U_j=\{x\in M \mid  2j-1<\Re(f(x))<2j+1\}$ for $j\in\{2,\dots, b_{0}-1\}$, $U_{b_0}=\{x\in M \mid \Re(f(x))>2b_0+1\}$, where $\Re(f)$ denotes the real part of $f$.
It follows that  $U_j$ is Stein,  that  $U_j$ is Runge in $M$,  and that $C_j\subset U_j$, for any $j\le b_{0}$. 

One can find  a compact connected subset  $W_j$ of 
$C_j$  such that inclusion $W_j\hookrightarrow C_j$ induces a surjective morphism $H_1(W_j,\bQ)\to H_1(C_j,\bQ)$
(e.g. by choosing a finite set of compact geometric generators of  $H_1(C_j,\bQ)$
 and  some connected compact $W_j$ containing all of them). 

Next, there exists  a smooth strictly plurisubharmonic exhaustion function  $\varphi_j:U_j\to \bR$
and  a sufficiently large real number $r_j\gg 1$ 
such that  $\bB_{j} := \{y\in U_j \mid \varphi_j(y)<r_j\}$  contains $\overline W_j$, its  boundary 
$\partial \bB_{j}$   is non-singular 
at every point of  $\bB_{j}\cap C_{j}$ and it intersects  $C_j$ transversely.

We may choose $\varphi_j$ such that $\bB_{j}\cap C_{j}$ is moreover  connected. 
Indeed, let  $\Omega_j$ be the connected component of $\bB_{j}\cap C_{j}$ which contains $W_j$, and
let $s_j=\min_{x\in C_j}\varphi_j(x)$, which exists since $\varphi_j$ is an exhaustion.
We know  that $\Omega_j$ has smooth boundary. It also follows that $\Omega_j$ is Runge in $C_j$ and, since $C_j$ is a smooth complex curve, that $\overline \Omega_j$
is holomorphically convex in $C_j$ and hence in $U_j$. This last statement follows from the definition and
Theorem 7.4.8  in \cite{Ho}.
We may choose  (see e.g.  \cite[Theorem 2.6.11]{Ho})  a smooth 
plurisubharmonic function $\psi_j:U_j\to [0,\infty)$ such that $\psi_j=0$ over a neighborhood of $\overline\Omega_j$ and $\psi_{j}\geq r_j-s_j+1$ over the closure of $(\bB_{j}\cap C_{j})\setminus \Omega_j$.
We have $\{y\in C_j \mid \varphi_j(y)+\psi_j(y)<r_j\}=\Omega_j$, hence replacing
$\varphi_j$ by $\varphi_j+\psi_j$ we get 
 the claimed additional property.

By Proposition \ref{p:runge}, $\bB_{j}$ is Runge in $U_j$ and therefore 
$\bB_{j}\cap C_{j}$ is Runge in $C_j$. It  then follows from Proposition \ref{p:rungeinclusion}
that the inclusion $\bB_{j}\cap C_{j} \hookrightarrow C_{j}$  induces an injective morphism
$H_1(\bB_{j}\cap C_{j},\bQ) \to  H_1(C_j,\bQ)$. As $W_j\subset \bB_{j}$
and $H_1(W_j,\bQ)\to H_1(C_j,\bQ)$ is surjective, we deduce that $H_1(\bB_{j}\cap C_{j},\bQ) \to  H_1(C_j,\bQ)$
is in fact an isomorphism.

By Ehresmann's fibration theorem, there exists some sufficiently small  connected open ball
$B'\subset\bigcap_{j=1}^{b_0} p(U_j)\subset B$ of $\lambda$ such that the restriction:
\begin{equation}\label{eq:triv}
p_{|}: p^{-1}(B')\cap \bB_{j} \to B'
\end{equation}
is a trivial C$^\infty$-fibration.

Let us  fix  some $t\in B'$ and let ${\mathcal C}_t = \{\Gamma_1,\dots,\Gamma_q\}$ be the set of  connected components of $p^{-1}(t)$.
We define $\beta:{\mathcal C}_\lambda\to{\mathcal C}_t$ by
$\beta(C_j)=$ the unique connected component of ${\mathcal C}_t$  which intersects
$\bB_{j}$.
By changing the indices we may assume that
$\beta(C_1)=\cdots=\beta(C_{n_1})=\Gamma_1$, $\ldots,$
 $\beta(C_{n_{q'-1}+1})=\cdots=\beta(C_{b_{0}})=\Gamma_{q'}$ where $q'\leq q$ and 
$1\leq n_1<n_2<\cdots<n_{q'-1}<b_{0}$. 

From the triviality of \eqref{eq:triv} it follows that, for $1\leq j\leq n_1$, we have:
\begin{equation}\label{eq:equal}
\dim H_1\left( \bB_{j}\cap \Gamma_1 ,\bQ\right)= \dim H_1\left( \bB_{j}\cap C_j ,\bQ\right)=\dim H_1(C_j,\bQ). 
\end{equation}
Since
$\bB_{j}\cap  \Gamma_1$ is Runge in $U_j\cap \Gamma_1$ and since the latter is Runge in $\Gamma_1$, it follows that $\bB_{j}\cap  \Gamma_1$ is Runge in $\Gamma_1$.  

We need now  Narasimhan's result  \cite[Lemma 1]{Na} which says that if $X$ is a Stein space and $X_1,X_2$ are two disjoint Runge open subsets of $X$ then $X_1\cup X_2$ is Runge in $X$ if and
only if any two compact subsets $K_1\subset X_1$, $K_2\subset X_2$ can be separated by the real part of a holomorphic function $f\in{\mathcal O}(X)$.

By applying \cite[Lemma 1]{Na} we get that the union $\bigcup_{j=1}^{n_1} \bB_{j}\cap\Gamma_1$ 
is Runge in $\Gamma_1$. By Proposition \ref{p:runge},
the inclusion induces 
a monomorphism $H_1\left(\bigcup_{j=1}^{n_1} \bB_{j} \cap \Gamma_1, \bQ \right) \to   H_1(\Gamma_1, \bQ)$, which implies that  $\dim H_1(\Gamma_1,\bQ) \geq 
\sum_{j=1}^{n_1}\dim H_1(C_j,\bQ)$. Repeating this argument for $\Gamma_2,\ldots,\Gamma_{q'}$
and summing up the inequalities we obtain that 
$\sum _{j=1}^{q'} \dim H_1(\Gamma_j,\bQ) \geq b_1(\lambda)$, which implies the inequality $b_1(t)\geq b_1(\lambda)$.
\end{proof}

\section{Proof of  Theorem \ref{t:main2}}\label{s:proofth1.3}

The proof of   Theorem \ref{t:main2} falls into two steps. We first define a covering $\widetilde M$ of $M$ such that
its restriction to the fibres of $p$ is the universal covering. In the second step we use this covering and the results of  \cite{Me} to deduce that $p$ is a
locally trivial fibration.
\bigskip

\noindent
{\bf Step 1.}
Our starting point is the following key theorem by Y. S. Ilyashenko:


\begin{theorem}\label{t:ilyashenko} \cite{Il},  \cite{Il1}
Let  $M$ be a Stein manifold foliated by complex curves,  let $\widehat B$ be a transversal cross-section and let
 ${\cF}_x$ denote the leaf of the foliation which contains the point  $x\in B$.

 Then there exists
 a complex manifold $\widetilde M$  together with a locally biholomorphic
map $\pi:\widetilde M\to M$ such that the restriction $\pi_{|}:\pi^{-1}({\cF}_x)\to {\cF}_x$ is the universal covering of ${\cF}_x$ with base point $x$, for any $x\in B$.  
\end{theorem}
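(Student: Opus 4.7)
The plan is to construct $\widetilde M$ as a space of leaf-homotopy classes of paths emanating from the transversal $\widehat B$, and then to transport the complex structure of $M$ through the foliation charts, so that the restriction of $\pi$ to each leaf becomes the classical path-space model of the universal cover. Set-theoretically, I would define
\[
\widetilde M \;=\; \bigl\{\,[(x_0, \gamma)] \,\big|\, x_0 \in \widehat B,\ \gamma : [0,1] \to \cF_{x_0},\ \gamma(0) = x_0 \,\bigr\}\big/\!\sim,
\]
where $\sim$ denotes homotopy inside the ambient leaf with endpoints fixed, and set $\pi([(x_0,\gamma)]) := \gamma(1)$.

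To put a complex structure on $\widetilde M$, I would use the local product structure of the foliation. Near any point $m \in M$, choose a foliation chart $\varphi : W \to U \times V$ with $U$ a polydisc in $\bC$, $V$ an open subset of $\widehat B$, and plaques $U \times \{v\}$. For each $\widetilde m = [(x_0,\gamma)] \in \pi^{-1}(m)$, concatenating $\gamma$ with the radial paths inside the plaque through $m$, and then varying $v \in V$ along the transversal, produces a canonical bijection of a neighborhood of $\widetilde m$ in $\widetilde M$ with $U \times V$; declaring these bijections to be biholomorphic charts defines the complex structure. Overlap compatibility follows from the uniqueness of path-lifting through chains of plaques and from the holomorphy of the resulting holonomy transition maps. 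By construction $\pi$ is then locally biholomorphic, and its restriction $\pi^{-1}(\cF_{x_0}) \to \cF_{x_0}$ sends a leaf-homotopy class $[\gamma]$ to its endpoint $\gamma(1)$, which is exactly the standard path-space model of the universal cover of $\cF_{x_0}$ with base point $x_0$.

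The main obstacle is showing that $\widetilde M$ is Hausdorff, equivalently that two distinct leaf-homotopy classes ending at the same point of $M$ can actually be separated by the charts just built. For generic smooth foliations such separation can fail because of pathological accumulation of leaves, so the holomorphy and Stein hypotheses must be used in an essential way: the strictly plurisubharmonic exhaustion on $M$ controls the geometry of the holonomy pseudogroup, while the leaves being holomorphic curves makes the holonomy maps rigid, in particular open and injective, so nearby plaques cannot merge or wander uncontrollably. This is precisely the technical point where Ilyashenko's analysis in \cite{Il, Il1} is required, yielding a Hausdorff complex manifold $\widetilde M$ together with the locally biholomorphic map $\pi$ having the asserted leafwise universal covering property.
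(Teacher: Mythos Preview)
This theorem is not proved in the paper at all: it is stated as a cited result of Ilyashenko \cite{Il, Il1}, and the paper only \emph{recalls} the construction of $\widetilde M$ later (in the subsection ``Following of the proof of Proposition~\ref{l:covering}'') in order to work with it. The description given there---points of $\widetilde M$ as leafwise homotopy classes of paths from $\widehat{p(x)}$ to $x$, with neighborhoods $\widetilde V_g$ obtained by moving such paths through tubular foliation charts---is exactly the path-space model you wrote down, so at the level of the construction you and the paper agree.

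Where your proposal stops is also where the paper stops: neither you nor the authors actually carry out the Hausdorff argument, and you explicitly defer it to \cite{Il, Il1}. That is honest, but it means your text is not a proof of the theorem, only an outline of the construction plus an identification of the hard step. If you intend this as a substitute for the paper's treatment, it is acceptable precisely because the paper itself treats the result as a black box. If you intend it as an independent proof, the essential content---why the Stein hypothesis forces separation of distinct leafwise homotopy classes---is still missing and would have to be supplied from Ilyashenko's papers.
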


\begin{definition}\label{d:notations}
 In the  setting of Theorem \ref{t:main2}, let us consider the foliation defined by the fibres of the  submersion $p:M\to B$ and the cross-section $\widehat{B}$ defined by some lift by $p$ of the base $B$. 
Let us denote by $\widehat{x}$ the point of $\widehat B$ which corresponds to $x\in B$ by the identification of $\widehat B$ with $B$.
 
\end{definition}
 By applying Theorem \ref{t:ilyashenko} to this setting we obtain the manifold $\widetilde M$ and the locally biholomorphic map $\pi:\widetilde M\to M$.
We then have:

\begin{proposition}\label{l:covering}
 The map $\pi:\widetilde M\to M$ is a covering.
\end{proposition}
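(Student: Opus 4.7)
The plan is to prove that every $m\in M$ admits a neighborhood $U$ which is evenly covered by $\pi$, i.e.\ that $\pi^{-1}(U)$ is a disjoint union of open subsets of $\widetilde M$ each mapped biholomorphically onto $U$ by $\pi$. As a first step I would use the submersion property of $p$ to pick a foliated chart $U\simeq V\times D$ around $m$, with $V\ni x:=p(m)$ open in $B$, $D$ a disk in $\bC$, $p$ corresponding to the projection onto $V$, and $m\leftrightarrow(x,0)$. In particular each slice $\{y\}\times D=U\cap\cF_y$ is a simply connected open subset of the leaf $\cF_y:=p^{-1}(y)$.

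The core of the argument is then to construct, for each lift $\widetilde m\in\pi^{-1}(m)$, a continuous section $\Phi_{\widetilde m}:U\to\widetilde M$ of $\pi$ with $\Phi_{\widetilde m}(m)=\widetilde m$. By Theorem~\ref{t:ilyashenko} the point $\widetilde m$ is represented by a homotopy class $[\gamma]$ of paths in $\cF_x$ from $\widehat x$ to $m$. Covering the compact set $\gamma([0,1])\subset\cF_x$ by finitely many foliated charts of $p$ and gluing them with a partition of unity, I would obtain, after shrinking $V$, a smooth trivialization of $p$ on an open neighborhood of $\gamma([0,1])$ in $M$ that is compatible with the chosen trivialization of $U$ near $m$ and preserves the cross-section $\widehat B$; this produces a continuously-varying family of paths $\gamma^y$ in $\cF_y$ from $\widehat y$ to $(y,0)$, with $\gamma^x=\gamma$. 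I would then set
\[
\Phi_{\widetilde m}(y,d):=\bigl((y,d),\ [\gamma^y*\tau_{(y,d)}]\bigr),
\]
where $\tau_{(y,d)}$ is the segment from $(y,0)$ to $(y,d)$ inside the simply connected slice $\{y\}\times D$. Since $\pi$ is a local biholomorphism and $\Phi_{\widetilde m}$ is continuous with $\pi\circ\Phi_{\widetilde m}=\id_U$, the image $\widetilde U_{\widetilde m}:=\Phi_{\widetilde m}(U)$ is an open neighborhood of $\widetilde m$ mapped biholomorphically onto $U$. The standard fact that two sections of a local homeomorphism agreeing at one point agree on a neighborhood, combined with connectedness of $U$, shows that $\widetilde U_{\widetilde m}\cap\widetilde U_{\widetilde m'}=\emptyset$ whenever $\widetilde m\ne\widetilde m'$.

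The hardest step, which I would tackle last, is surjectivity: that every $\widetilde u\in\pi^{-1}(U)$ belongs to some $\widetilde U_{\widetilde m}$. Writing $u:=\pi(\widetilde u)=(y,d)$ and representing $\widetilde u$ by a path $\gamma'$ in $\cF_y$ from $\widehat y$ to $u$, I would first homotope $\gamma'$ inside the simply connected slice $\{y\}\times D$ so that its final segment becomes $\tau_u$; what remains is to show that the initial portion $\eta$, a path in $\cF_y$ from $\widehat y$ to $(y,0)$, is homotopic to $\gamma^y$ for some path $\gamma$ from $\widehat x$ to $m$ in $\cF_x$. This amounts to the surjectivity (indeed the bijectivity) of the horizontal-transport map on based homotopy classes of paths, and is the main technical obstacle. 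I would establish it by constructing the inverse: running the same trivialization-and-transport argument in the opposite direction on a representative of $[\eta]$ produces a candidate class $[\gamma]$ in $\cF_x$, and the simple connectedness of each flow-box shows that forward transport of $\gamma$ recovers $\eta$ up to homotopy. Together with the disjointness established above, this yields $\pi^{-1}(U)=\bigsqcup_{\widetilde m\in\pi^{-1}(m)}\widetilde U_{\widetilde m}$ and proves that $\pi$ is a covering.
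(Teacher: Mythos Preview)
Your argument has a genuine gap: it never uses the two hypotheses that are in force here, namely that $M$ is Stein and that all fibres have the \emph{same finite} first Betti number. Without them the conclusion is false (see Example~3.8 just after the proof in the paper), so somewhere your reasoning must break down. The breakdown is the uniformity of the neighborhood $U$.

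You fix $U\simeq V\times D$ first, but then, for each $\widetilde m$ represented by a path $\gamma$, you cover $\gamma([0,1])$ by finitely many flow boxes and ``after shrinking $V$'' obtain a trivialization transporting $\gamma$. That shrunken $V$ depends on $\gamma$, hence on $\widetilde m$: longer or wilder representatives force smaller $V$. Thus your section $\Phi_{\widetilde m}$ is only defined on some $V_{\widetilde m}\times D\subsetneq U$, and since $\pi^{-1}(m)$ is in general infinite there is no reason $\bigcap_{\widetilde m}V_{\widetilde m}$ contains a neighborhood of $x$. Consequently you have not produced a single $U$ that is evenly covered. The same problem recurs in your surjectivity step: to transport an arbitrary $\eta$ in $\cF_y$ back to $\cF_x$ you need a trivialization around $\eta([0,1])$, and the base neighborhood over which that trivialization exists depends on $\eta$ and may well fail to contain $x$, however close $y$ is to $x$. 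Your appeal to ``simple connectedness of each flow-box'' does not help, because the homotopy between $\gamma_1^y$ and $\gamma_2^y$ (or between $\eta$ and a transported path) need not stay inside the flow boxes at all.

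This is exactly what the paper's proof is designed to overcome, and it is where the hypotheses enter. Using a strictly plurisubharmonic exhaustion $\varphi$ on $M$, one takes a sublevel set $M_{b,R}=M_b\cap\{\varphi<R\}$ with $R$ large; the Runge property (Propositions~\ref{p:runge} and~\ref{p:rungeinclusion}) plus the constancy of $b_1$ force the inclusion $M_{b,R}\hookrightarrow M_b$ to induce an isomorphism on $\pi_1$ for \emph{every} $b$ in a fixed neighborhood $B'$ of $p(x)$ (Lemma~\ref{l:iso_fundam_groups}). Hence every homotopy class of paths---for every $\widetilde m\in\pi^{-1}(m)$ and every $\widetilde u\in\pi^{-1}(U)$---has a representative lying in the fixed relatively compact set $\{\varphi<R\}$, on which a single Ehresmann trivialization over $B'$ is available. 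That is what gives a uniform $U$ and makes both the disjointness and the surjectivity go through. Your outline would be salvageable if you inserted precisely this step; as written, the transport-by-flow-boxes construction cannot give a covering.
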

\begin{proof} 
 
Let us fix  some point $x\in M$ and denote $M_b :=p^{-1}(b)$,  for $b\in B$. 
As in the proof of Proposition \ref{p:bettilc}, 
 we choose a compact  connected subset $W$ of 
$M_{p(x)}$ containing  $\widehat{p(x)}$ such that inclusion $W\hookrightarrow M_{p(x)}$ induces a surjective 
morphism $H_1(W,\bQ))\to H_1(M_{p(x)}\bQ)$.  

Since $M$ is Stein, there is a strictly plurisubharmonic exhaustion function $\varphi:M\to\bR$ and a sufficiently large 
real number $R\gg 1$ 
such that the boundary $\partial \{y\in M \mid \varphi(y) \le R\}$ intersects 
the fibre $M_{p(x)}$ transversely, that $W\subset \{y\in M_{p(x)} \mid \varphi(y)<R\}$. Moreover, as in the proof of Proposition \ref{p:bettilc}, we may assume that $\{y\in M_{p(x)} \mid \varphi(y)<R\}$ is connected. 
For $b\in B$ we denote:
\[M_{b,R}:=M_b\cap\{y\in M \mid \varphi(y)<R\}.\]

 By Propositions \ref{p:runge} and \ref{p:rungeinclusion} and the inclusion $W\subset M_{p(x),R}$ we deduce that 
the inclusion $M_{p(x),R}\hookrightarrow M_{p(x)}$ induces an isomorphism $H_1(M_{p(x),R},\bQ)\simeq H_1(M_{p(x)},\bQ)$

By Ehresmann's fibration theorem, there exists a sufficiently small connected open neighborhood 
$B'\subset B$ of $p(x)$ such that the restriction $p_| :p^{-1}(B')\cap\{y\in M \mid \varphi(y)<R\} \to B'$ is a trivial C$^\infty$-fibration. 
Consequently,  for any $b\in B'$, we have the isomorphism:
\begin{equation}\label{eq:iso}
 H_1(M_{b,R},\bQ) \simeq H_1(M_{p(x),R},\bQ).
 \end{equation}

Again by Propositions \ref{p:runge} and \ref{p:rungeinclusion} we deduce that for $b\in B'$
 the inclusion $M_{b,R}\hookrightarrow M_b$ induces an injective map
$H_1(M_{b,R},\bQ)\to H_1(M_{b},\bQ)$.

According to our hypothesis, $\dim  H_1(M_{p(x)},\bQ)=\dim  H_1(M_{b},\bQ)$ and hence by \eqref{eq:iso} we get
for all $b\in B'$:
\[\dim H_1(M_{b,R},\bQ) = \dim H_1(M_{p(x),R},\bQ) =\dim H_1(M_{p(x)},\bQ) = \dim H_1(M_{b},\bQ) \]
and that the isomorphism $H_1(M_{b,R},\bQ) \simeq H_1(M_{b},\bQ)$ is induced by the inclusion.

\begin{lemma}\label{l:iso_fundam_groups}
 The inclusion $M_{b,R}\hookrightarrow M_b$ induces an 
 isomorphism $\pi_1(M_{b,R})\to \pi_1(M_{b})$, for any $b\in B'$.
\end{lemma}
\begin{proof}
As we have assumed that $M_{p(x),R}$ is connected and 
$p_| :p^{-1}(B')\cap\{y\in M \mid \varphi(y)<R\} \to B'$ is a trivial fibration, it follows that
 the open set $M_{b,R}$ is connected, for any $b\in B'$. By a Mayer-Vietoris argument using the finite dimensional homology groups of $M_b$ and  $M_{b,R}$ we show that the set $M_b \m M_{b,R}$ is a disjoint union of  finitely many cylinders, more precisely  collars over the boundary components of $\partial \overline{M_{b,R}}$, which are just circles. By retracting the collars to their boundaries we get that $M_{b,R}$ is a deformation retract of $M_b$.
\end{proof}

We return now to the proof of Proposition \ref{l:covering}.
Let us recall Ilyashenko's construction \cite{Il} of $\widetilde M$ for our foliation defined by the fibres of $p$. 

For each
$x\in M$, a point in $\pi^{-1}(x)\subset \widetilde M$ is by definition an equivalence class of paths 
$\gamma:[0,1]\to M_{p(x)}$ with fixed ends: $\gamma(0)=\widehat{p(x)}$, $\gamma(1)=x$.
The topology of $\widetilde M$ is defined as follows. For some point $\tilde x\in \widetilde M$, let
 $x:=\pi(\tilde x)$ and  choose a path $\gamma:[0,1]\to M_{p(x)}$ from $\widehat{p(x)}$ to $x$ such that $\tilde x=[\gamma]$. By considering a tubular neighborhood of $\im \gamma$ on which $p$ is a submersion and Ehresmann's theorem can be applied,
 there exists an open neighborhood $V$ of $x$ such that there exists a continuous
function $g:V\times [0,1]\to M$ with the following properties:

$\bullet$ $g(x,t)=\gamma(t)$ for any $t\in[0,1]$,

$\bullet$ $g(y,t)\in M_{p(y)}$ for any $y\in V$ and $t\in[0,1]$,

$\bullet$ $g(y,1)=y$ and $g(y,0)=\widehat{p(y)}$ for any $y\in V$.

Denoting  $\gamma_y(t):=g(y,t)$, one has by definition that:
\begin{equation}\label{eq:tilde}
 \widetilde V_g:=\{[\gamma_y] \mid y\in V\}
 \end{equation}
is  an
open neighborhood of $\tilde x$ and that the restriction $\pi_{|}: \widetilde V_g\to V$ is a homeomorphism, where the homotopy class $[\gamma_y]$ is  considered in $M_{p(x)}$ and $rel(0,1)$.
\medskip

Let now $U_0\subset M_{p(x),R}$ be a sufficiently small contractible neighborhood of $x$ and consider  a contraction $H:U_0\times[0,1]\to U_0$, i.e. $H(y,0)=y$, $H(y,1)=x$, $H(x,t)=x$ for $y\in U_0$ and $t\in[0,1]$. 
Let 
\begin{equation}\label{eq:diff}
 G=(G_1,G_2):p^{-1}(B')\cap\{y\in M \mid \varphi(y)<R\} \to M_{p(x)}\times B'
 \end{equation}
 be a diffeomorphism such that
 $G_2=p$,
$G(y)=(y,p(x))$ for each $y\in M_{p(x), R}$, and
 $G(\hat y)=(\widehat{p(x)},y)$ for each $y\in B'$.

Let then  $U:=G^{-1}(U_0\times B'$) be our neighborhood of $x$ in $M$, where $G$ is the diffeomorphism  defined at \eqref{eq:diff}.
We show in the following that $U$ is evenly covered by $\pi$.

So let $\tilde x\in \pi^{-1}(x)$. Since $\pi_1(M_{p(x),R})\to \pi_1(M_{p(x)})$ is surjective by Lemma \ref{l:iso_fundam_groups}, we can find
a path $\mu\in\tilde x$ such that $\im \mu\subset M_{p(x),R}$. Then the path $\gamma:[0,1]\to M_{p(x),R}$
defined by $\gamma(t)=\mu(2t)$ for $t\in[0,\frac 12]$ and 
$\gamma(t)=x$ for $t\in[\frac 12, 1]$  is homotopic to $\mu$,  hence $\gamma\in\tilde x$.
We define a continuous map $Z:U\times [0,1]\to M$ as follows:
$$Z(y,t)=\left
\{\begin{array}{lll}
G^{-1}(\mu(2t),p(y))&\mbox{if}&t\in[0,\frac 12],\cr
G^{-1}(H(G_1(y),2-2t),p(y))&\mbox{if}&t\in[\frac 12,1]
\end{array}
\right. $$

We have then that
$Z(y,t)\in M_{p(y)}$, $Z(y,1)=y$ and $Z(y,0)=\widehat{p(y)}$ for any $y\in U$, and 
$Z(x,t)=\gamma(t)$ for any $t\in[0,1]$. Then $\widetilde U_Z$ is a neighborhood of $\tilde x$ and the restriction
$\pi_{|}:\widetilde U_Z\to U$  is a homeomorphism, where  the notation $\widetilde U_Z$  is defined as at \eqref{eq:tilde}.
We have thus started with some point $\tilde x \in \pi^{-1}(x)$ and we have attached to it the map $Z$ and the open neighborhood $\widetilde U_Z$; since we want to compare in the following two such neighborhoods, let us denote $\widetilde U_Z$ by
$\widetilde U_{\tilde x}$ .

\smallskip
\noindent
In order to prove that $U$ is evenly covered by $\pi$, it suffices to prove the following two statements:

\begin{enumerate}
 \item  $\widetilde U_{\tilde x_1}\cap \widetilde U_{\tilde x_2}=\emptyset$ for $\tilde x_1, \tilde x_2\in \pi^{-1}(x)$, $\tilde x_1\neq \tilde x_2$,
  \item   $\pi^{-1}(U)=\bigcup _{\tilde x\in \pi^{-1}(x)}\widetilde U_{\tilde x}$.
\end{enumerate}

To prove (a) we let $\tilde y_0\in \widetilde U_{\tilde x_1}\cap \widetilde U_{\tilde x_2}$ 
where $y_0:=\pi(\tilde y_0)$.
Then there exist $\mu_1,\mu_2 :[0,1]\to  M_{p(x),R}$ with $\mu_1(0)=\mu_2(0)=p(x)$
and $\mu_1(1)=\mu_2(1)=x$ such that $[\mu_1]=\tilde x_1, [\mu_2]=\tilde x_2$, and that the paths $\nu_1,\nu_2:[0,1]\to M_{p(y_0),R}$
defined by

$$\nu_j(t)=\left
\{\begin{array}{lll}
G^{-1}(\mu_j(2t),p(y_0))&\mbox{if}&t\in[0,\frac 12],\cr
G^{-1}(H(G_1(y_0),2-2t),p(y_0))&\mbox{if}&t\in[\frac 12,1]
\end{array}
\right. $$
 (for $j= 1, 2$) represent both $\tilde y_{0}$ and hence they are homotopically equivalent in $M_{p(y_0)}$.
Moreover, they are 
homotopically equivalent in $M_{p(y_0),R}$ since the canonical morphism $\pi_1(M_{p(y_0),R})\to \pi_1(M_{p(y_0)})$ induced by the inclusion is bijective (cf Lemma \ref{l:iso_fundam_groups}).

 It follows that the two paths (for $j= 1, 2$):

$$t\rightarrow\left
\{\begin{array}{lll}
\mu_j(2t)&\mbox{if}&t\in[0,\frac 12],\cr
H(G_1(y_0),2-2t)&\mbox{if}&t\in[\frac 12,1]
\end{array}
\right. $$
are homotopically equivalent and therefore the paths $\mu_1$ and $\mu_2$ are homotopically equivalent.
Hence $\tilde x_1=\tilde x_2$ and (a) is proved.
\medskip

To prove (b)  we let $y_0\in U$ and  $\tilde y_0\in \pi^{-1}(y_0)$. Since 
the canonical morphism $\pi_1(M_{p(y_0),R})\to \pi_1(M_{p(y_0)})$ is surjective by Lemma \ref{l:iso_fundam_groups}, it follows that
there exists $\nu\in\tilde y_0$ such that $\im \nu \subset M_{p(y_0),R}$. Let 
$\mu:[0,1]\to M_{p(x),R}$ be defined by $\mu(t)=G_1(\nu(2t))$ if $t\in[0,\frac 12]$
and $\mu(t)=H(G_1(y_0),2t-1)$ if $t\in[\frac 12,1]$. Let $\tilde x=[\mu]$.  From the very definition if the following path: 
\[\nu_1(t)=\left
\{\begin{array}{lll}
\nu(4t)&\mbox{if}&t\in[0,\frac 14],\cr
G^{-1}((H(G_1(y_0),4t-1),p(y_0))&\mbox{if}&t\in[\frac 14,\frac 12],\cr
G^{-1}(H(G_1(y_0),2-2t),p(y_0))&\mbox{if}&t\in[\frac 12,1]
\end{array}
\right. \]
we get that $[\nu_1]\in U_{\tilde x}$ and moreover that 
 $\nu_1$ and $\nu$ are homotopically equivalent. Hence $\tilde y_0\in U_{\tilde x}$, which proves our claim. 
  This ends the proof of Proposition \ref{l:covering}.
\end{proof}
\bigskip

\noindent
{\bf Step 2.}
We use the following theorem proved by Meigniez, which is a nice generalization of 
a result by Palmeira \cite{Pa}.

\begin{theorem}  \cite[Corollary 31]{Me} \\
A surjective smooth submersion with all fibres diffeomorphic to $\bR^p$ is a locally trivial fibration.
\fin
\end{theorem}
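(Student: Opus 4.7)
The plan is to reduce to a purely local question in the base and then construct an explicit trivialization over a contractible neighborhood. Fix a point $b_0 \in N$ and a small contractible open neighborhood $U \subset N$ of $b_0$. It suffices to build a diffeomorphism $\Phi \colon U \times \bR^p \to f^{-1}(U)$ commuting with the projections to $U$, since local triviality is a local property of $N$.

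First I would construct two auxiliary objects. A smooth section $s \colon U \to M$ of $f$ exists by obstruction theory on the contractible base: the successive obstructions to extending a partial section lie in cohomology with coefficients in $\pi_k(\bR^p)=0$. Equivalently, one builds $s$ by a partition-of-unity construction in local trivializations (which are available by the local submersion theorem), extending across overlaps because the fibers are convex in each chart. Second, I would assemble a fiberwise proper exhaustion $\rho \colon f^{-1}(U) \to [0,\infty)$ with $\rho(s(b))=0$ and $\rho|_{F_b}$ proper on each fiber $F_b$. To build $\rho$, I cover $f^{-1}(U)$ by product charts $V_\alpha \cong U_\alpha \times W_\alpha$ with $W_\alpha \subset \bR^p$ open, define a local fiberwise squared distance to $s(U_\alpha)$ in each chart, glue by a partition of unity, and finally modify so that the restriction to each fiber is proper, using that $F_b \cong \bR^p$ admits such exhaustions.

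Next, equip $M$ with a Riemannian metric and let $H = (\ker df)^\perp$ be the horizontal distribution. For the radial vector field $X$ on $U$ (based at $b_0$), its horizontal lift $\tilde X$ typically fails to be complete on the non-compact fibers; I would modify $\tilde X$ by a vertical correction of the form $-\lambda \, \nabla^v\rho$, where $\nabla^v\rho$ is the vertical gradient and $\lambda$ is chosen uniquely so that the modified lift preserves the level sets of $\rho$. Because these level sets are compact (by properness) and $U$ is a ball, this modified horizontal lift is complete, and its time-one flow carries each $y \in F_{b_0}$ smoothly to a point of $F_b$ on the same $\rho$-level. Defining $\Phi(b,y)$ as this flow, and composing with any fixed diffeomorphism $\bR^p \cong F_{b_0}$, produces the candidate trivialization; smoothness follows from smooth dependence of ODEs on parameters and initial conditions.

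The main obstacle will be showing that $\Phi(b,\cdot) \colon F_{b_0} \to F_b$ is actually bijective, i.e.\ a diffeomorphism onto the \emph{whole} fiber rather than an open subset. Injectivity is clear from the flow being a local diffeomorphism and preserving the fiber foliation; surjectivity amounts to the claim that the flow reaches every level set of $\rho|_{F_b}$, which in turn requires that the family $\{\rho|_{F_b} = c\}$ for regular $c$ consists of compact manifolds varying smoothly and without topology-change as $b$ ranges in $U$. Ultimately one wants that these level sets are diffeomorphic to a fixed $S^{p-1}$, which is where the hypothesis that the fibers are diffeomorphic to $\bR^p$ (and not merely contractible) enters essentially. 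Making this uniform in $b$ is the delicate step: it is essentially the generalization of Palmeira's result and appears to require either Cerf-type pseudoisotopy arguments or the more refined direct construction carried out in Meigniez's paper.
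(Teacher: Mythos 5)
There is a genuine gap, and you identify it yourself: the surjectivity of the flow map onto the whole fibre, equivalently the assertion that the level sets of your fibrewise exhaustion vary over $U$ without change of topology, is exactly the content of the theorem (this is where Palmeira's and Meigniez's work lives), and your sketch defers it to ``Cerf-type pseudoisotopy arguments or the more refined direct construction carried out in Meigniez's paper.'' Nothing in your construction excludes precisely the phenomena this paper is about: components or handles of the sets $\{\rho|_{F_b}\le c\}$ can vanish or emerge at infinity as $b$ moves, even though every fibre is diffeomorphic to $\bR^p$; ruling this out is not a technical refinement of the Ehresmann-type argument but the whole difficulty. There are also concrete defects earlier in the construction. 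The vertical correction $-\lambda\,\nabla^v\rho$ with $\lambda$ chosen to kill $d\rho$ along the lift is undefined at fibrewise critical points of $\rho$, and such points are unavoidable: $\rho|_{F_b}$ is proper and bounded below, so it has a minimum, and an exhaustion glued by partitions of unity will in general have many further critical points; moreover a field that preserves every $\rho$-level can never move points between levels, so even where it is defined its time-one flow cannot trivialize the fibre without an additional (unconstructed) radial ingredient. Finally, the obstruction-theoretic existence of the section $s$ is circular, since obstruction theory needs the homotopy lifting property, i.e.\ that $f$ is already a fibration; this piece is harmless only because over a sufficiently small ball a section exists directly from the submersion normal form.

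For context: the paper does not prove this statement at all; it quotes it as Meigniez \cite[Corollary 31]{Me} (a generalization of Palmeira's theorem) and uses it as a black box. So your attempt is not competing with an argument in the paper but with Meigniez's, whose proof proceeds along genuinely different lines (analysis of submersions via vanishing and emerging cycles and homotopy-lifting criteria, not a metric horizontal-lift construction). As written, your text is a plausible reduction of the theorem to its hard core, but not a proof of it.
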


In our case the fibres of $p\circ \pi:\widetilde M\to B$  are diffeomorphic to
$\bR^2$, so this theorem implies that $p\circ \pi:\widetilde M\to B$ is a locally trivial fibration.

Let us recall the definition of a Serre fibration.

\begin{definition}
A map $\pi:E\to B$ between differentiable manifolds is called a Serre fibration if for any simplex $X$,
any homotopy $f:X\times[0,1]\to B$ and $\tilde f_0$ a lifting of $f_{|X\times\{0\}}$, there exists
a lifting $\tilde f:X\times[0,1]\to E$ of $f$ such that $\tilde f_{|X\times\{0\}}=\tilde f_0$.
\end{definition}

Since, by Proposition \ref{l:covering}, $\pi:\widetilde M\to M$ is a covering and, at the same time, we have just seen that 
$p\circ \pi:\widetilde M\to B$ is a locally trivial fibration, we deduce that they both are Serre fibrations. It then follows that  
$p:M\to B$ is a Serre fibration (which is straightforward, see also \cite[Example 38]{Me}). 
\medskip

In order to conclude, we need another result by Meigniez:

\begin{proposition} \cite[Corollary 32]{Me} \\
Suppose that $E$ and $B$ are smooth manifolds such that $\dim_\bR E=\dim_\bR B+2$ and $\pi: E\to B$
is a surjective smooth map. If $\pi$ is both a submersion and a Serre fibration then $\pi:E\to B$ is
a locally trivial fibre bundle. \fin
\end{proposition}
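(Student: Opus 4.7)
The plan is to promote $\pi$ from a submersion plus Serre fibration to a locally trivial fiber bundle. Fix a base point $b_{0}\in B$ and a contractible chart $B'\ni b_{0}$; it suffices to trivialize $\pi$ over $B'$. Because $\pi$ is a Serre fibration over the contractible base $B'$, standard obstruction theory yields a fiberwise homotopy equivalence $\pi^{-1}(B')\simeq F\times B'$, where $F:=\pi^{-1}(b_{0})$, and in particular every fiber $F_{b}=\pi^{-1}(b)$, $b\in B'$, is homotopy equivalent to $F$.

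The first key step exploits the dimensional hypothesis. Since $\pi$ is a submersion of relative real dimension $2$, each fiber $F_{b}$ is a smooth surface without boundary. By the classification of surfaces (Ker\'ekj\'art\'o--Richards in the non-compact case, the standard genus/orientability classification in the compact case), a second-countable surface without boundary is determined up to diffeomorphism by its homotopy type together with its orientability; since orientability is itself a homotopy invariant in dimension $2$, all fibers $F_{b}$ with $b\in B'$ are pairwise diffeomorphic to $F$.

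The second step upgrades this pointwise diffeomorphism to a smooth local trivialization. I would choose a smooth horizontal distribution $\cH\subset TE$ complementary to $\ker d\pi$, so that horizontal lifts of smooth paths in $B'$ exist locally. The Serre fibration hypothesis is then used to guarantee that such lifts extend along the whole path, ruling out the ``escape to infinity'' that otherwise spoils the Ehresmann argument for non-compact fibers. Fixing a reference diffeomorphism $\varphi_{0}\colon F\to F_{b_{0}}$ from the previous step, I would propagate it by horizontal parallel transport along the radial rays of the ball $B'$ to produce a smooth fiber-preserving map $\Phi\colon F\times B'\to\pi^{-1}(B')$, which is a diffeomorphism by construction.

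The main obstacle, as in any bundle statement with non-compact fibers, is controlling non-compactness of the leaves of $\cH$. The point where the Serre fibration hypothesis is truly indispensable --- and not merely convenient --- is in showing that horizontal lifts do not escape to infinity in finite $B'$-time, so that the propagation map $\Phi$ is well defined over all of $B'$. A secondary technical difficulty is the passage from continuous to smooth trivializations: here one invokes that in dimension $2$ a homotopy equivalence of surfaces is homotopic to a diffeomorphism (a theorem of Epstein--Mangler), allowing the continuous trivialization produced by the Serre fibration property to be smoothed without altering its bundle-theoretic structure.
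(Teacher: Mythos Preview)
The paper does not prove this proposition; it is quoted verbatim as \cite[Corollary 32]{Me} and closed with a $\Box$. So there is no in-paper argument to compare against, and your task was essentially to reconstruct Meigniez's proof.

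Your architecture is reasonable, but Step 3 contains a genuine gap that is exactly the content of the theorem. You write that ``the Serre fibration hypothesis is then used to guarantee that such lifts extend along the whole path, ruling out the `escape to infinity'\,''. This is an assertion, not an argument. The Serre property furnishes \emph{some} continuous lift of a homotopy of simplices; the horizontal distribution furnishes a \emph{specific} smooth lift obtained by integrating a vector field. There is no a priori reason these coincide or even control one another: the flow-generated lift can run off to infinity in finite base-time while a different, non-smooth lift (the one guaranteed by the Serre property) stays inside a compact set. Turning the existence of continuous lifts into completeness of the horizontal flow is precisely what requires work, and in Meigniez's paper this is handled through his notions of vanishing and emergent cycles: one shows a Serre fibration has neither, and then that a submersion without vanishing or emergent cycles is a bundle. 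Your sketch skips this mechanism entirely. The ``smoothing'' paragraph at the end does not repair the gap, because the issue is not regularity of the trivialization but its very existence over all of $B'$.

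A smaller point on Step 2: for non-compact surfaces the Ker\'ekj\'art\'o--Richards invariants include the space of ends together with the subsets of ends that are non-planar or non-orientable. Saying ``orientability is a homotopy invariant'' does not by itself show that these end-space decorations are recovered from the homotopy type; that is a separate (true, but nontrivial) fact you would need to cite or prove.
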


Applying this proposition to our  submersive Serre fibration $p:M\to B$, we deduce that it is a locally trivial fibration.
 Theorem \ref{t:main2} is now proved.  \fin
 
 \smallskip
 
 Note that this also proves Theorem \ref{t:main1g} in case that the fibres $p^{-1}(t)$ of the holomorphic map $p$ are connected for any $t$ in some small neighborhood of $\lambda$.  In the next section we shall prove the general result.

\bigskip

We end this section by two simple examples which illustrate the importance of the hypotheses of
Theorem \ref{t:main2}.

\begin{example}
 Let $M=\bC^2\setminus (\{(z,w)\in\bC^2 \mid zw=1\}\cup\{0\})$ and let $p:M\to\bC$ be the projection on the first coordinate. Then $p$ is a surjective submersion, all fibres are diffeomorphic to $\bC^*$ but obviously this is not a locally trivial fibration. $M$ is not Stein.    

If $\gamma_n : [0,1]\to \bC^2$, $\gamma_n(t)=(\frac 1n,e^{2\pi it})$ for $n\geq 2$, and $\gamma:[0,1]\to \bC^2$, $\gamma(t)=(0,e^{2\pi it})$ then $\gamma_n$  is homotopically trivial
in $p^{-1}(\frac 1n)$, $\gamma_n$ converges uniformly to $\gamma$
but $\gamma$ is not homotopically trivial
in $p^{-1}(0)$.  We have here a non-trivial vanishing cycle in the terminology of Meigniez's \cite{Me}. If we apply Ilyashenko's  construction to this setting then the topology of $\widetilde M$ will be not separated.  We also have another ``cycle vanishing at infinity'' in the terminology of \cite{Ti-book}.
\end{example}
 
\begin{example}
 Let $M=\bC^2\setminus \left(\bigcup_{n\in\bN}\{(z,w)\in\bC^2  \mid w=n\}\cup\{(z,w)\in\bC^2 \mid z=w\}\right)$ and let $p:M\to\bC$ be the projection on the first coordinate. Then $p$ is a surjective submersion, all fibers are diffeomorphic to
$\bC\setminus \bN$,  and $M$ is Stein. However $p$ is not a locally trivial fibration. In the terminology of \cite{Me},  non-trivial emergent cycles occur around the origin. This satisfies Ilyashenko's theorem but not the  finiteness of the Betti numbers of fibres condition  of  Theorem \ref{t:main2}.

\end{example}


  
\section{Proof of Theorem \ref{t:main1g}}
 

After having proved Theorem \ref{t:main1g} in the particular case of connected fibres (Theorem \ref{t:main2}), let us now treat the general case.
 
 Let $p:M\to B$ be a  holomorphic map between connected complex manifolds such that $M$ is Stein, $\dim M=\dim B+1$, and
the Betti numbers  $b_0(t)$ and $b_1(t)$ of all the fibres  $p^{-1}(t)$  are finite.
Let $\lambda \in \im p \setminus \overline{p(\Sing p)}\subset B$. We assume that
the Euler characteristic of the fibres is constant for $t$ varying in some neighborhood of $\lambda$
and no connected component of $p^{-1}(t)$ is vanishing at infinity when $t\to\lambda$.

Let $\cC_t$ be the set of  connected components of the fibre $p^{-1}(t)$, which is by hypothesis a finite set with $b_0(t)$ elements, 
and let $b_0(\lambda)=b_0$.

 Let $\{ C_1, \ldots , C_{b_0}\}$ denote the  connected components of $p^{-1}(\lambda)$.
 Choose and fix some point $x_j\in C_j$, for any $j$. 
 Since $p$ is a submersion over some small neighborhood of $\lambda$, we may choose some mutually disjoint small balls 
$B_j\subset \bC^{n+1}$  centered at $x_j$, and some sufficiently small ball $D\subset B$
 centered at $\lambda$ such that the intersection $B_j \cap p^{-1}(\lambda)$ is  connected and that
  the restriction 
 \begin{equation}\label{eq:fib}
  p_| : B_j \cap p^{-1}(D) \to D
 \end{equation}
 
is a trivial C$^\infty$-fibration for any $j\in \overline{1, b_0}$.

We then define $a_t(j)$ as the  connected component of $p^{-1}(t)$ which intersects $B_ j \cap p^{-1}(D)$. This yields a well defined  function $a_t : \{ 1, \ldots , b_0\} \to \cC_t$ since  $B_j \cap p^{-1}(t)$ is itself  connected for any $t\in D$
and any $j$, by \eqref{eq:fib}.

\begin{lemma}\label{l:surj}
The Betti numbers $b_0(t)$ and  $b_1(t)$ of the fibre $p^{-1}(t)$ are constant for $t$ in some  small neighborhood of $\lambda$.
\end{lemma}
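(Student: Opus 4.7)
The plan is to combine three pieces: (i) the lower semi-continuity $b_1(t) \ge b_1(\lambda)$ provided by Proposition \ref{p:bettilc} (applicable since $\lambda$ lies in the interior of $\im p\setminus \overline{p(\Sing p)}$, so $p$ is a submersion over some neighborhood of $\lambda$); (ii) an upper bound $b_0(t)\le b_0(\lambda)$, which will follow from showing that the function $a_t$ defined in the excerpt is surjective onto $\cC_t$ for all $t$ close to $\lambda$; and (iii) the hypothesis that $\chi(t)=b_0(t)-b_1(t)$ is locally constant. Once (i)--(iii) are in place, one gets $0\le b_1(t)-b_1(\lambda) = b_0(t)-b_0(\lambda)\le 0$, so both Betti numbers must be constant near $\lambda$.

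The nontrivial step is proving surjectivity of $a_t$, and here the (NV) hypothesis enters. I would argue by contradiction: assume there is a sequence $t_k\to\lambda$ and connected components $\Gamma_k\in \cC_{t_k}$ with $\Gamma_k\notin a_{t_k}(\{1,\dots,b_0\})$. By the (NV) property at $\lambda$, the sequence $\{\Gamma_k\}$ is not locally finite, so some compact set $K\subset M$ satisfies $\Gamma_k\cap K\ne\emptyset$ for infinitely many $k$. Extracting a subsequence and choosing $y_k\in \Gamma_k\cap K$, one may assume $y_k\to y_\ast$. Since $p(y_\ast)=\lambda$, the point $y_\ast$ belongs to some component $C_{j_0}$ of $p^{-1}(\lambda)$.

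The heart of the argument is then a path-lifting via Ehresmann applied to $p$ along a compact path. Pick a continuous path $\gamma:[0,1]\to C_{j_0}$ with $\gamma(0)=x_{j_0}$ and $\gamma(1)=y_\ast$. The image $\im\gamma$ is compact and contained in a submersive locus of $p$, so covering it by finitely many local trivializations of $p$ we obtain an open tube $\Omega\supset \im\gamma$ and a disk $D_\gamma\subset D$ around $\lambda$ such that $p_{|}:\Omega\cap p^{-1}(D_\gamma)\to D_\gamma$ is a trivial $C^\infty$-fibration with connected fiber $\Omega\cap p^{-1}(\lambda)\supset \im\gamma$. We may further arrange $\Omega\cap B_{j_0}\ne\emptyset$ near $x_{j_0}$ and that $\Omega$ contains a neighborhood of $y_\ast$. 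For $k$ large, $t_k\in D_\gamma$ and $y_k\in\Omega$, so $y_k$ lies in the connected set $\Omega\cap p^{-1}(t_k)$, which also meets $B_{j_0}\cap p^{-1}(t_k)$. Consequently $y_k$ is in the same connected component of $p^{-1}(t_k)$ as points of $B_{j_0}\cap p^{-1}(t_k)$, namely $a_{t_k}(j_0)$. Hence $\Gamma_k=a_{t_k}(j_0)$, contradicting the choice of $\Gamma_k$.

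With surjectivity of $a_t$ established on some neighborhood $D'\subset D$ of $\lambda$, we get $b_0(t)\le b_0(\lambda)$ for $t\in D'$. Combining with $b_1(t)\ge b_1(\lambda)$ from Proposition \ref{p:bettilc} and the constancy $b_0(t)-b_1(t)=b_0(\lambda)-b_1(\lambda)$ forces $b_0(t)=b_0(\lambda)$ and $b_1(t)=b_1(\lambda)$ on $D'$, concluding the proof. The main obstacle I expect is the Ehresmann-type path lifting along the compact path $\gamma$ inside the possibly non-compact fibre $C_{j_0}$, since $p$ need not be proper; this has to be handled by a finite covering of $\im\gamma$ by local trivializations together with a careful gluing, as sketched above.
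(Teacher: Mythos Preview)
Your proposal is correct and follows essentially the same route as the paper: prove surjectivity of $a_t$ by contradiction using (NV) to find a limit point $y_\ast\in C_{j_0}$, then thicken a path in $C_{j_0}$ from $x_{j_0}$ to $y_\ast$ into a tube on which $p$ is trivial (via Ehresmann along the compact image), forcing the errant component to meet $B_{j_0}$; then combine $b_0(t)\le b_0(\lambda)$ with Proposition~\ref{p:bettilc} and the constancy of $\chi$. Your anticipated ``main obstacle'' is handled exactly as in the paper, by the tubular-neighborhood argument you sketch.
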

\begin{proof}
We claim that the function $a_t$ is surjective,  for any $t$ in some small neighborhood of $\lambda$.  If not so, then there is a sequence $t_i \to \lambda$ and some  connected component $C^{t_i}$ of $p^{-1}(t_i)$ which does not intersect any ball $B_j$, for $j\in \overline{1, b_0}$.
By the assumed non-vanishing condition (NV) at $\lambda$, there exists some compact $K\subset M$
such that $C^{t_{i}}\cap K \not= \emptyset$ for any $i\gg 1$. (We actually get this possibly after passing to an infinite sub-sequence of sets $C^{t_{i_p}}$.)
Let us then fix some point $z_{t_{i}} \in C^{t_{i}}\cap K$. We may assume, again after passing to a sub-sequence, that $z_{t_{i}}$ tends to some limit point $z_0 \in K$. Consequently,  $\lim_{i\to \infty}p(z_{t_{i}}) = p(z_0) = \lambda$. Since $z_0 \in C_1 \cup \ldots \cup C_{b_0}$, there is some $j_0\in \overline{1, b_0}$ such that $z_0 \in C_{j_0}$.  By taking a simple path linking $x_{j_0}$ to $z_0$ within $C_{j_0}$ and by considering a tubular neighborhood of it in $M$, we may assume, by using Ehresmann's theorem, that there exists a  connected open set $\cN_{j_0}$ containing $z_0$ and $x_{j_0}$ such that  the intersection $\cN_{j_0} \cap p^{-1}(\lambda)$ is  connected and that
the restriction:
\begin{equation}\label{eq:trivfib}
 p_| :\cN_{j_0} \cap p^{-1}(D) \to D 
 \end{equation}
is a trivial C$^\infty$-fibration (possibly after shrinking the ball $D$), and therefore all its fibres are  connected.

Since $\cN_{j_0}$ is open and contains $z_0$,  it contains the points $z_{t_{i}}\in C^{t_i}$ for sufficiently large index $i\in \bN$. Hence for $i\gg 1$ the connected set $p^{-1}(t_{i}) \cap \cN_{j_0}$ is contained in $C^{t_i}$ and on the other hand  
$B_{j_0}\cap \cN_{j_0} \cap p^{-1}(D) \not= \emptyset$.

Since we have assumed that $C^{t_i} \cap B_{j_0} = \emptyset$,  we thus get a contradiction. Our surjectivity claim is proved.  In particular, we have  $b_0(t)\leq b_0(\lambda)$ for $t$ in some small neighborhood of $\lambda$.

 By Proposition \ref{p:bettilc}, possibly after shrinking $D$, 
we also have $b_1(t)\geq b_1(\lambda)$.   We therefore get:
\[ \chi(t)=b_0(t)-b_1(t)\leq b_0(\lambda)-b_1(t)\leq b_0(\lambda)-b_1(\lambda)=\chi(\lambda)\]
and since $\chi(\lambda) = \chi(t)$ by our hypothesis, we
 deduce that the Betti numbers $b_0(t)$ and $b_1(t)$ are constant at $\lambda$.  
\end{proof}

In particular, from the equality $b_0(t)=b_0(\lambda)$ proved in Lemma \ref{l:surj} we deduce that the function $a_t$ is bijective, for $t$ in some sufficiently small neighborhood of $\lambda$.
 We may therefore denote from now on $C_j(t) := a_t(j)$, $j\in \overline{1, b_0}$.
 
 Keeping the above notations, we moreover have the following result showing that the connected components belong to families:

\begin{lemma}\label{l:open}
 The union $V_j := \bigcup_{t\in D} C_j(t)$ is an open set in $M$, for any $j\in \overline{1, b_0}$.
\end{lemma}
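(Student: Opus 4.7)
The plan is to establish openness of $V_j$ pointwise: given $x \in V_j$, I will produce an open neighborhood of $x$ contained in $V_j$ by trivializing $p$ along a path in $C_j(t_0)$ joining $x$ to a point of $B_j$, and then using the trivialization to track the component $C_j(t)$ in nearby fibres.

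Concretely, I would fix $x \in V_j$ and pick $t_0 \in D$ with $x \in C_j(t_0)$. By construction $C_j(t_0) = a_{t_0}(j)$ meets $B_j \cap p^{-1}(D)$, so one can select a point $y \in C_j(t_0) \cap B_j$. Since $C_j(t_0)$ is an open connected complex curve and hence path-connected, choose a path $\gamma : [0,1] \to C_j(t_0)$ with $\gamma(0) = y$ and $\gamma(1) = x$. The hypothesis that $D$ sits in the interior of $\im p \setminus \overline{p(\Sing p)}$ ensures that $p$ is a submersion at every point of $p^{-1}(D)$.

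The central step is then to invoke Ehresmann's theorem along the compact path $\gamma$: patching together local product charts supplied by the submersion property yields an open neighborhood $\cT \subset p^{-1}(D)$ of $\gamma([0,1])$, an open ball $D' \subset D$ about $t_0$, and a trivialization $\Phi : (\cT \cap p^{-1}(t_0)) \times D' \to \cT \cap p^{-1}(D')$ with $p \circ \Phi(z,t) = t$ and $\Phi(z, t_0) = z$, such that $\cT \cap p^{-1}(t_0)$ is a connected open neighborhood of $\gamma([0,1])$ inside $C_j(t_0)$.

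To conclude, I would note that $B_j$ is open and $\Phi(y, \cdot)$ is continuous, so for some smaller ball $D'' \subset D'$ about $t_0$ one has $\Phi(y,t) \in B_j \cap p^{-1}(t) \subset C_j(t)$ for every $t \in D''$. For such $t$, the slice $\cT \cap p^{-1}(t)$ is connected (being homeomorphic via $\Phi$ to $\cT \cap p^{-1}(t_0)$) and contains the point $\Phi(y,t) \in C_j(t)$, hence lies inside the connected component $C_j(t) \subset V_j$. Therefore $\cT \cap p^{-1}(D'')$ is an open neighborhood of $x$ in $M$ contained in $V_j$, which yields the claimed openness. The only slightly delicate point is the construction of the local trivialization along the entire compact path $\gamma$; this is a routine consequence of the submersion property combined with compactness of $\gamma([0,1])$, and so does not present a real obstacle.
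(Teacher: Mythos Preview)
Your proof is correct and follows essentially the same approach as the paper's: both take a point in $C_j(t_0)$, connect it by a path inside $C_j(t_0)$ to a point of $B_j$ (the paper uses the point $z_0$ on the section $\hat D \subset B_j$ through $x_j$, you use an arbitrary $y \in C_j(t_0)\cap B_j$), trivialize $p$ via Ehresmann over a tubular neighborhood of the path, and then observe that nearby fibres of this tube meet $B_j$ and are therefore contained in $C_j(t)$.
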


\begin{proof}
 Let $y_0 \in C_j(t_0)$ for some $t_0\in D$. Let $\hat D$ be the section of the trivial  fibration \eqref{eq:fib} which contains the center  $x_j\in C_j(\lambda)$ of the ball $B_j$, and let $z_0 :=  C_j(t_0)\cap \hat D$.  We consider a simple path within $C_{j}(t_0)$ connecting $z_{0}$ to $y_0$.  Like in the proof of Lemma \ref{l:surj}, one may find a connected open tubular neighborhood $\cN_{j}\subset M$ of this  path such that   $\cN_{j} \cap p^{-1}(t_0) = \cN_{j} \cap C_j(t_0)\subset V_{j}$ is  connected and that the restriction:
 \begin{equation}\label{eq:fibtriv2}
 p_| :\cN_{j} \cap p^{-1}(D_0) \to D_0
 \end{equation}
  is a trivial C$^\infty$-fibration, where $D_0\subset D$ is some sufficiently small ball centered at $t_0\in \bC^n$. 
 Since $\cN_{j}$ contains by definition some neighborhood of  $z_0$,  the intersection $\cN_{j}\cap \hat D$ contains a neighborhood of  $z_0$ in $\hat D$.  It then follows from the definition of the components $C_j(t)$ that the fibres of the fibration \eqref{eq:fibtriv2} must be subsets of $V_j$,  after possibly shrinking $D_0$.  And since the neighborhood $\cN_{j} \cap p^{-1}(D_0)$ is included in $V_j$, it contains by definition a small ball around $y_0$. 
\end{proof}

We  may now finish  the proof of  Theorem \ref{t:main1g}.
By the definition of $V_j$ and the fact that the functions $a_t$ are bijections for all $t\in D$, we get that the  union $\bigcup_{j=1}^{b_0} V_j$ is a disjoint union of connected components and it is equal to $p^{-1}(D)$, and that 
$p(V_j)=D$, $\forall j\in \overline{1,b_0}$. 
Since $D$ is Stein it follows that each $V_j$ is Stein. Therefore $p_{|V_j}:V_j\to D$ is a submersion with connected 1-dimensional fibres $C_j(t)$  on the Stein manifold $V_j$.

By Proposition \ref{p:bettilc} there exists an open neighborhood $D'$ of $\lambda$, $D'\subset D$, such that we have $b_1(C_j(t))\geq b_1(C_j)$ for $t\in D'$. We may assume without loss of generality that this holds for the same $D'$ for any $j\in \overline{1,b_0}$.
Since we have proved that the total first Betti number is constant, i.e.  $\sum_{j=1}^{b_0} b_1(C_j(t))=\sum_{j=1}^{b_0} b_1(C_j)$, we obtain the constancy on every component, i.e.   $b_1(C_j(t))= b_1(C_j)$, $\forall j\in \overline{1,b_0}$.
We then apply Theorem \ref{t:main2} to the restriction $p_{|V_j \cap p^{-1}(D')}$ and get that it is a trivial fibration $\forall  j\in \overline{1,b_0}$. Therefore $p$ is a trivial fibration.


\section{Some consequences of the preceding results}\label{s:conseq}

\begin{corollary}\label{t:main3}
Let $p:M\to B$ be a holomorphic map between  connected complex manifolds, where $M$ is Stein  with $\dim M=\dim B+1$, and the components of the fibres of $p$ have finitely generated fundamental group.
Let $\lambda \in \im p \setminus \overline{p(\Sing p)}\subset B$.
\begin{enumerate}
\rm \item \it Assume that for $t$ in some neighborhood of $\lambda$ the fibres $p^{-1}(t)$ have constant  Euler characteristic  and the  Betti number $b_1$ of each of their components is at least 2. Then $p$ is a locally trivial fibration at $\lambda$.

\rm \item \it
Assume that for $t$ in some neighborhood of $\lambda$ the fibres $p^{-1}(t)$ have constant 
 Betti numbers $b_0(t)$ and $b_1(t)$  and the  Betti number $b_1$ of each of their components is at least 1.  Then $p$ is a locally trivial fibration at $\lambda$.
\end{enumerate}
\end{corollary}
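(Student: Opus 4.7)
The plan is to reduce both (a) and (b) to Theorem \ref{t:main1g} by proving that the non-vanishing condition (NV) of Definition \ref{d:vanishing} holds at $\lambda$. Under (a) the Euler characteristic is already hypothesized constant near $\lambda$; under (b) constancy of $b_{0}$ and $b_{1}$ gives constancy of $\chi=b_{0}-b_{1}$. The finitely-generated-fundamental-group hypothesis guarantees that $b_{1}(p^{-1}(t))$ is finite for each $t$ (since $H_{1}$ is the abelianization of $\pi_{1}$), so the finiteness assumption of Theorem \ref{t:main1g} is met. Once (NV) is secured, Theorem \ref{t:main1g} yields local triviality of $p$ at $\lambda$.

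To establish (NV), I would argue by contradiction. Suppose there is a sequence $t_{k}\to\lambda$ such that $p^{-1}(t_{k})$ contains $n(t_{k})\geq 1$ components that vanish at infinity, i.e.\ components disjoint from the mutually disjoint balls $B_{1},\ldots,B_{b_{0}(\lambda)}$ constructed as in the proof of Theorem \ref{t:main1g} around chosen base points of the components of $p^{-1}(\lambda)$. The associated map $a_{t_{k}}$ from the $b_{0}(\lambda)$-element index set of components of $p^{-1}(\lambda)$ into the set of components of $p^{-1}(t_{k})$ misses every vanishing component, so
\[
b_{0}(t_{k})\ =\ |\mathrm{im}(a_{t_{k}})|+n(t_{k})\ \leq\ b_{0}(\lambda)+n(t_{k}).
\]

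The main technical point is a sharpening of the semi-continuity of Proposition \ref{p:bettilc}. Its proof applies the Runge/Narasimhan argument only to those components $\Gamma_{l}$ of $p^{-1}(t_{k})$ lying in $\mathrm{im}(a_{t_{k}})$, and yields $\sum_{\Gamma\in\mathrm{im}(a_{t_{k}})}b_{1}(\Gamma)\geq b_{1}(\lambda)$. The $b_{1}$-contributions from the vanishing components are untouched by that argument and simply add on top of this bound:
\[
b_{1}(t_{k})\ \geq\ b_{1}(\lambda)\ +\ \sum_{\Gamma\ \text{vanishing}}b_{1}(\Gamma).
\]

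Under hypothesis (a), each $b_{1}(\Gamma)\geq 2$, so $b_{1}(t_{k})\geq b_{1}(\lambda)+2n(t_{k})$ and
\[
\chi(t_{k})\ \leq\ b_{0}(\lambda)+n(t_{k})-b_{1}(\lambda)-2n(t_{k})\ =\ \chi(\lambda)-n(t_{k})\ <\ \chi(\lambda),
\]
contradicting constancy of $\chi$. Under hypothesis (b), each $b_{1}(\Gamma)\geq 1$ gives $b_{1}(t_{k})\geq b_{1}(\lambda)+n(t_{k})>b_{1}(\lambda)$, contradicting constancy of $b_{1}$. In either case (NV) holds and Theorem \ref{t:main1g} concludes the proof. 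The only delicate issue I anticipate is verifying that the Runge/Narasimhan step in Proposition \ref{p:bettilc} is genuinely insensitive to components of $p^{-1}(t_{k})$ outside $\mathrm{im}(a_{t_{k}})$; this seems clear since the sets $\bB_{j}\cap\Gamma_{l}$ used there only involve components that meet the tubes around the $C_{j}(\lambda)$, so the lower bound on $\sum_{l}b_{1}(\Gamma_{l})$ for those $\Gamma_{l}$ is obtained without reference to the vanishing ones.
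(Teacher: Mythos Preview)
Your argument is correct and follows essentially the same route as the paper: both proofs extract from the proof of Proposition~\ref{p:bettilc} the refined inequality $\sum_{\Gamma\in\mathrm{im}(\beta)} b_{1}(\Gamma)\ge b_{1}(\lambda)$, add the contributions of the components outside the image (each of $b_{1}\ge 2$ in (a), $\ge 1$ in (b)), and compare with the assumed constancy of $\chi$ (resp.\ $b_{1}$) to force surjectivity of $\beta$, hence (NV), and then invoke Theorem~\ref{t:main1g}. The only cosmetic difference is that the paper works directly with the map $\beta$ and the sets $\bB_{j}$ from Proposition~\ref{p:bettilc} rather than mixing in the balls $B_{j}$ and the map $a_{t}$ from the proof of Theorem~\ref{t:main1g}; your closing remark correctly identifies why this mixing is harmless.
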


\begin{proof}

(a).  
We  use the construction and the notations in the proof of Proposition \ref{p:bettilc}, from which we  recall the following:
let $\mathcal C_\lambda = \{C_1, \ldots, C_{b_0}\}$ be the set of  connected components of the fibre $p^{-1}(\lambda)$, where $b_0:=b_0(\lambda)$. We chose a holomorphic function $f:M\to\bC$ such that $f_{|C_j}\equiv 2j$, for any $j\in\{1,\dots,b_0\}$.
We defined $U_1=\{x\in M \mid \Re(f(x))<3\}$, $U_j=\{x\in M \mid  2j-1<\Re(f(x))<2j+1\}$ for $j\in\{2,\dots, b_{0}-1\}$, $U_{b_0}=\{x\in M \mid \Re(f(x))>2b_0+1\}$, where $\Re(f)$ denotes the real part of $f$.
We chose smooth strictly plurisubharmonic exhaustion functions  $\varphi_j:U_j\to \bR$, 
sufficiently large real numbers $r_j\gg 1$, and a connected open neighborhood 
$B'$ of $\lambda$ such that the restrictions $p_| :p^{-1}(B')\cap\{y\in U_j \mid \varphi_j(y)<r_j\} \to B'$ are trivial C$^\infty$-fibrations with connected fibres.


Fixing  some $t\in B'$,  let ${\mathcal C}_t = \{\Gamma_1,\dots,\Gamma_q\}$ be the set of  connected components of $p^{-1}(t)$. We have defined the function $\beta:{\mathcal C}_\lambda\to{\mathcal C}_t$ by
$\beta(C_j):=$ the unique connected component of ${\mathcal C}_t$  which intersects
$\{y\in U_j \mid \varphi_j(y)<r_j\}$.
By changing the indices we may assume that
$\beta(C_1)=\cdots=\beta(C_{n_1})=\Gamma_1$, $\ldots,$
 $\beta(C_{n_{q'-1}+1})=\cdots=\beta(C_{b_{0}})=\Gamma_{q'}$ where $q'\leq q$ and 
$1\leq n_1<n_2<\cdots<n_{q'-1}<b_{0}$.  
Finnaly, in the proof of Proposition \ref{p:bettilc} we have obtained the inequality: $\sum _{j=1}^{q'}b_1(\Gamma_j)\geq b_1(\lambda)$.

 Since by our hypothesis  $b_1(\Gamma_j)\geq 2$ 
for every $j\in\{1,\dots,q\}$ and, in particular, for $j\in\{q'+1,\dots,q\}$, we obtain that
$b_1(t)\geq b_1(\lambda)+2(q-q')$.  By our hypothesis  we have $\chi(t)=\chi(\lambda)$ and hence
$b_1(t)=b_1(\lambda)-b_0+q$, since $q=b_0(t)$  by definition. This yields $b_1(\lambda)-b_0+q\geq b_1(\lambda)+2(q-q')$
and hence $2q'\geq b_0+q$. 

Since by our construction we have $q'\leq q$ and $q'\leq b_0$, we  conclude that the equalities $b_0=q$ and $q=q'$ must hold.
This shows in particular that we have ``non-vanishing of components'' at $\lambda$. 
We may now apply Theorem \ref{t:main1g} to conclude.


\noindent
(b). The same arguments as at (a), but  instead of $b_1(\Gamma_j)\geq 2$ we have now by hypothesis  $b_1(\Gamma_j)\geq 1$ and $b_0=q$.
We thus obtain the inequality $b_1(t)\geq b_1(\lambda)+(b_0-q')$. 

Since $b_1(t)=b_1(\lambda)$ by hypothesis, we get $q'\geq b_0$.
Since $b_0\geq q'$ by construction, we must have equality $b_0=q'$, 
thus no connected component vanishes at infinity,  and we conclude by applying
 Theorem \ref{t:main1g}.

\end{proof}

For  a polynomial map $f=(f_1,\dots,f_n):\bC^{n+1}\to\bC^n$,  we denote by $\chi(t)$ the  Euler characteristic
of the fibre $F_{t}:=f^{-1}(t)$ and by
 $\overline F_t$ the closure of  $F_{t}$ in $\bP^{n+1}$, in some fixed system of coordinates of $\bC^{n+1}$.

\begin{corollary}
Let $f :\bC^{n+1}\to\bC^n$ be a polynomial map and let $\lambda \in \im f \setminus \overline{f(\Sing f)}\subset \bC^{n}$. 
 If the degree  $\deg \overline F_t$ and the Euler characteristic $\chi(t)$ are constant for $t$ varying in  some neighborhood of $\lambda$,  
then $\lambda \not\in B(f)$.
\end{corollary}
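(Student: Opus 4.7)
The plan is to reduce this corollary to Theorem \ref{t:main1}, so the task boils down to verifying the non-vanishing property (NV) at $\lambda$, since the Euler characteristic is already assumed constant. I would argue by contradiction: suppose there is a sequence $t_k\to\lambda$ together with connected components $C_{t_k}\subset F^{-1}(t_k)$ such that the family $\{C_{t_k}\}_k$ is locally finite in $\bC^{n+1}$.

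Setting $d:=\deg\overline{F_\lambda}$, I would choose a projective hyperplane $H\subset\bP^{n+1}$ satisfying two properties simultaneously: (a) $H$ meets $\overline{F_\lambda}$ transversally in $d$ distinct points, all lying in the affine chart $\bC^{n+1}$; and (b) for every $k$, $H$ avoids the finite set of points at infinity $\overline{C_{t_k}}\setminus C_{t_k}$. Condition (a) defines a Zariski-open subset of the dual projective space of hyperplanes, since it excludes hyperplanes tangent to $\overline{F_\lambda}$ as well as those through the finitely many points of $\overline{F_\lambda}\setminus F^{-1}(\lambda)$; each instance of (b) is also Zariski-open. Because the sequence $\{t_k\}$ is countable and each $\overline{C_{t_k}}$ has only finitely many points at infinity, a Baire argument provides a single $H$ fulfilling (a) and (b).

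With such $H$ fixed, openness of transversality within the family $\{\overline{F_t}\}_t$, combined with the identity $\deg(H\cap\overline{F_t})=\deg\overline{F_t}=d$, forces the intersection $H\cap\overline{F_t}$ to consist, for $t$ in some neighborhood $D$ of $\lambda$, of $d$ distinct points depending holomorphically on $t$ and converging, as $t\to\lambda$, to the $d$ affine points of $H\cap\overline{F_\lambda}$. Hence these $d$ points stay in a bounded region of $\bC^{n+1}$ uniformly for $t\in D$. On the other hand, each $\overline{C_{t_k}}$ is a projective curve of positive degree, so it meets $H$ in at least one point; by (b) such a point lies in the affine part and thus in $C_{t_k}$, providing $p_k\in H\cap C_{t_k}$. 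Then $p_k\in H\cap F^{-1}(t_k)$ is bounded by the above observation, while the local finiteness of $\{C_{t_k}\}_k$ forces $|p_k|\to\infty$. This contradiction yields (NV), and Theorem \ref{t:main1} concludes $\lambda\notin B(F)$.

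The step I expect to demand the most care is producing a \emph{single} hyperplane $H$ which is generic with respect to the central fibre $\overline{F_\lambda}$ \emph{and} avoids simultaneously the countable collection of finite sets of points at infinity $\overline{C_{t_k}}\setminus C_{t_k}$. The Baire argument handles this cleanly because each constraint defines a nonempty Zariski-open subset of the dual projective space of hyperplanes. The remaining verification, that the $d$ intersection points vary continuously and remain in the affine chart for $t$ close to $\lambda$, follows from the openness of transversality in the proper family of projective curves $\{\overline{F_t}\}_t$ of constant degree.
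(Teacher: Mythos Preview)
Your proof is correct and follows the same route as the paper's: verify property (NV) at $\lambda$ by means of a generic hyperplane section, then invoke Theorem~\ref{t:main1}. The paper's version is terser but rests on the same geometric idea.

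One remark worth making, since you flagged the Baire step as the most delicate: condition (b) and the accompanying Baire argument are in fact superfluous. Your own argument already establishes that for every $t$ in the neighbourhood $D$ of $\lambda$, the intersection $\overline H\cap\overline{F_t}$ consists of exactly the $d$ affine points obtained by continuation from $\overline H\cap\overline{F_\lambda}$ (this is where the constancy of $\deg\overline{F_t}$ together with B\'ezout is used). In particular $\overline H$ meets $\overline{F_t}\cap H_\infty$ in no point at all, for every $t\in D$. Hence for each large $k$ (so that $t_k\in D$) the hyperplane $\overline H$ automatically avoids $\overline{C_{t_k}}\setminus C_{t_k}$; condition (b) is a consequence of condition (a) plus the degree hypothesis, not an extra requirement. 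The proof therefore needs only a \emph{single} genericity condition on $H$, namely (a), and no countable-intersection argument. This is essentially how the paper proceeds: once it records that $|F_t\cap H|=d$ for all $t$ near $\lambda$, it concludes directly that $H$ meets every component of $F_t$ in a bounded point.
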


\begin{proof}  
Since $\deg \overline F_t$ is constant, it follows that the number of intersection points $F_t \cap H$, where $H\subset \bC^{n+1}$ is  a general hyperplane,  is constant, for all $t$ close to $\lambda$. We show that this implies  (NV) at $\lambda$. 

By our hypothesis, the hyperplane $H$ intersects all components of $F_t$. If not (NV) at $\lambda$, then there is a component which is ``lost''
at infinity, hence  there is at least some point $p\in F_t \cap H$ which  tends to infinity
when $t\to \lambda$, which just means that the restriction $f_{| H}$ is not proper. The number of intersection points $F_t \cap H$ is a locally semicontinuous function in the sense that this number cannot locally increase in the limit whenever the limit of points exists in $F_{\lambda}$. In our case there is at least a path $\gamma$  consisting of such intersection points $\gamma(t)\in F_t \cap H$, such that $\| \gamma(t)\| \to \ity$ as $t\to \lambda$. This yields the contradiction. We may conclude by applying Theorem \ref{t:main1}.
\end{proof}

\begin{remark} The above statement has been proved in \cite[Corollary 2.2]{HN}  in the particular case when the zero locus at infinity $Z((f_1)_{d_{1}},\ldots , (f_n)_{d_{n}})\cap H_\infty$ of the homogeneous parts $(f_i)_{d_{i}}$ of the highest degree $d_i$ of $f_{i}$ is of dimension zero.
In this case we have the constancy of the degree $\deg(\overline F_t)= d_{1}\cdots d_{n}$ for every $t$
in a neighborhood of $\lambda$.
\end{remark}

\begin{remark} 
The following example shows that one can have local triviality without the constancy of the degree of the fibres.  Let $f:\bC^3\to\bC^2$,   $f(x,y,t)=(y+tx^2,t)$.
Then $f$ is a locally trivial fibration: indeed,  $g:\bC^3\to \bC^3$,  $g(u,v,s)=(u,v-su^2,s)$,  is an isomorphism of inverse
 $g^{-1}(x,y,z)=(x,y+tx^2,t)$, and one has
$f\circ g(u,v,s)=(v,s)$. Computing the fibres of $f$, one has
 $f^{-1}(0,0)=\{y=t=0\}$ of degree 1, and
for $\alpha\neq 0$, $f^{-1}(0,\alpha)=\{(x,y,t) \mid t=\alpha,y+\alpha x^2=0\}$ of degree 2.
\end{remark}


\end{document}